\documentclass[11pt,a4paper]{article}
\usepackage[T1]{fontenc}
\usepackage{pgf}
\usepackage{tikz}
\usepackage[symbol]{footmisc}
\usetikzlibrary{arrows,automata}
\usepackage{verbatim}
\usepackage{caption}
\usepackage{epsf,epsfig,amsfonts,amsgen,amsmath,amstext,amsbsy,amsopn,amsthm,amssymb}
\usepackage{color,graphicx,xcolor}
\usepackage{mathrsfs,hyperref}
\usepackage{enumerate}
\usepackage{url}
\usepackage[shortlabels]{enumitem}
\usetikzlibrary{patterns}
\usepackage{algorithm, algorithmic}
\usepackage{longtable,multirow,array,booktabs}
\usepackage{changepage}

\usepackage{setspace}
\newtheorem{dfn}{Definition}[section]
\newtheorem{thm}[dfn]{Theorem}

\newtheorem{lem}[dfn]{Lemma}
\newtheorem{fact}[dfn]{Fact}
\newtheorem{obs}[dfn]{Observation}

\newcommand{\dB}{{\mathcal{B}}}

\def\wsat{\mathrm{wsat}}
\makeatletter

\newcommand{\Rmnum}[1]{\expandafter\@slowromancap\romannumeral #1@}
\makeatother

\DeclareMathOperator{\sd}{sd}

\title{The Complexity of Weak Saturation for Complete Graphs and Balanced Complete Bipartite Graphs}
\author{Yihan Chen\footnote{School of Mathematical Sciences, University of Science and Technology of China, Hefei, Anhui 230026, China (\texttt{cyh2020@mail.ustc.edu.cn})}~~~~~~ Tianying Xie\footnote{School of Mathematics and Statistics, Fuzhou University, Fuzhou, Fujian 350108, China (\texttt{xiety@fzu.edu.cn})}~~~~~~}
\date{\today}

\begin{document}

\maketitle

\begin{abstract}
For graphs $F$ and $H$, a spanning subgraph $G$ of $F$ is weakly $H$-saturated in $F$ if the edges in $E(F)\setminus E(G)$ can be added one at a time, each addition creating a new copy of $H$. Recently, Tancer and Tyomkyn proved that, given an $n$-vertex graph $F$, deciding whether $\wsat(F,K_3)=n-1$ is NP-hard. In this paper, we study the decision version of the weak saturation problem and show that, for every fixed integer $r\ge 3$, given a graph $F$ and an integer $k$, deciding whether $\wsat(F,H)\le k$ is NP-complete when $H\in\{K_r,K_{r,r}\}$. Our approach uses novel graph-theoretic and topological ideas and techniques, yielding new constructions that build on the construction of Tancer and Tyomkyn. In particular, our proofs bring the flag-no-square property, a fundamental property in topology that is of independent interest, into the study of weak saturation problem.

\end{abstract}

\section{Introduction}\label{Sec:Introduction}
Let $F$ and $H$ be finite graphs.
A spanning subgraph $G\subseteq F$ is called weakly $H$-saturated in $F$ if the missing edges $E(F)\setminus E(G)$ can be ordered as $e_1,\ldots,e_m$ so that, whenever $e_i$ is added to $G+\{e_1, \cdots, e_{i-1}\}$, the new edge $e_i$ lies in a copy of $H$ in the resulting graph.
The \emph{weak saturation number} is defined as the following,
\[
    \wsat(F,H)=
    \min\{|E(G)|:G\subseteq F\text{ is weakly $H$-saturated}\}.
\]
In the special case $F=K_n$, we write $\wsat(n,H)$ for $\wsat(K_n,H)$.

The study of weak saturation problem was initiated by Bollob\'as~\cite{Bol68} in 1968, motivated by saturation problems for $k$-uniform hypergraphs. He determined $\wsat(n,K_r)$ for $3\le r< 7$ and conjectured that $\wsat(n,K_r)=\binom{n}{2}-\binom{n-r+2}{2}$ for $n\ge r\ge 2$.
This conjecture was later proved by Lov\'asz~\cite{Lov77}.
Independent algebraic proofs were subsequently given by Frankl~\cite{Fra82}, Kalai~\cite{Kal84} and Alon~\cite{Alo85}.

Explicit constructions often provide upper bounds for weak saturation numbers, but determining the exact value usually requires sharp lower bounds, which are often the main difficulty.
Kalai~\cite{Kal84} introduced a linear-algebraic method for proving lower
bounds, while Alon~\cite{Alo85} proved that for every fixed graph $H$ there
exists a constant $c_H$ such that $\wsat(n,H)=c_H n+o(n)$.
More recently, Terekhov and Zhukovskii~\cite{TZ23,TZ24} developed new
combinatorial methods for lower bounds and constructed examples showing
limitations of the classical linear-algebraic approach. On the other hand,
understanding the possible values of the limiting coefficient $c_H$ has also
received considerable attention. Ascoli and He~\cite{AH25} characterized all
possible rational values of $c_H$.

Weak saturation numbers have also been investigated for a wide range of graph and hypergraph families, including sparse graphs, multiple copies of fixed graphs, complete bipartite patterns, product graphs, and bipartite or multipartite hypergraphs; see, e.g.,~\cite{AVZ25,BBMR12,BS02,BTT23,CFFS21,FG14,FGJ13,KMM21,MMT24,MNS17,MS15,ShT23}

Recently, the computational complexity of weak saturation number has also attracted attention. Tancer and Tyomkyn~\cite{TT25} proved that given a graph $F$ with $n$ vertices as input, it is NP-hard to decide whether $\wsat(F, K_3) = n - 1$. Their proof connects $K_3$ weak saturation to shellability and collapsibility of $2$-dimensional complexes. Before stating their result, we introduce the terminologies needed below. A pure complex is \emph{shellable} if its maximal faces admit an ordering $F_1,\ldots,F_m$ such that, for every $i>1$,
$$
F_i\cap \left(\bigcup_{j<i}F_j\right)
$$
is a nonempty pure complex of dimension $\dim F_i-1$. Such an ordering is called a \emph{shelling order}. A face $\tau$ is a \emph{free face} if it is properly contained in a unique maximal face $\sigma$. A \emph{collapse} for free face $\tau$ and maximal face $\sigma$ removes all faces $\gamma$ with $\tau\subseteq \gamma\subseteq\sigma$; when $\dim\tau=\dim\sigma-1$, it is called an \emph{elementary collapse}. Note that a collapse can be decomposed into a sequence of elementary collapses. A complex is \emph{collapsible} if a finite sequence of collapses reduces it to a single vertex. With these terminologies in place, we state Theorem~3 of~\cite{TT25}, the key result underlying their reduction.

\begin{thm}[Tancer and Tyomkyn~\cite{TT25}]\label{thm:TT-ori}
There is a polynomial time algorithm that produces from a given
$3$-CNF formula $\phi$ with $t$ variables a pure $2$-dimensional connected
complex $L_\phi$ with $\widetilde{\chi}(L_\phi)=t$ such that the following
statements are equivalent:
\begin{enumerate}
    \item[(i)] \textit{The formula $\phi$ is satisfiable.}
    \item[(ii)] \textit{The complex $L_\phi$ is shellable.}
    \item[(iii)] \textit{The complex $L_\phi$ is collapsible after removing some $t$ triangles.}
    \item[(iv)] \textit{We have $\operatorname{wsat}(L_\phi^{(1)},K_3)=n-1$ where $n$ is the number of vertices of $L_\phi$.}
    \item[(v)] \textit{The complex $L_\phi$ is collapsible after removing some number of triangles.}
\end{enumerate}
\end{thm}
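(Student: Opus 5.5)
The plan is to follow the architecture of Tancer and Tyomkyn. First build $L_\phi$ from gadgets: a variable gadget, a clause gadget, and connecting pieces. Then prove the five statements equivalent through the cycle (i)$\Rightarrow$(ii)$\Rightarrow$(iii)$\Rightarrow$(v)$\Rightarrow$(i), and attach (iv) separately by showing (iii)$\Leftrightarrow$(iv)$\Leftrightarrow$(v).

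For the construction, I would start from a contractible but non-collapsible $2$-complex, such as a triangulated Bing's house or dunce hat, with no free edges. Each variable $x_i$ gets a copy $D_i$ of this complex in which one distinguished triangle removal can start a collapse, and there are two possible "entry points" encoding $x_i=\mathrm{true}$ and $x_i=\mathrm{false}$. Clause gadgets are attached so that a clause gadget becomes collapsible only after a variable gadget corresponding to one of its satisfying literals has been opened. Every piece is pure, $2$-dimensional, built in polynomial size, and glued along contractible subcomplexes (paths or disks), so Mayer--Vietoris together with additivity of the Euler characteristic gives $\widetilde\chi(L_\phi)=t$. Concretely, each variable gadget should contribute exactly one $2$-cycle, so $H_2(L_\phi)\cong\mathbb Z^t$ and $H_1=0$.

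The implications are then handled in turn. (i)$\Rightarrow$(ii): given a satisfying assignment, I would write down an explicit shelling order. Begin inside the gadget of a true literal, sweep through each clause gadget via its satisfied literal, and finish each variable gadget so that exactly one triangle per gadget closes a $2$-sphere. (ii)$\Rightarrow$(iii): in a shellable pure $2$-complex, the triangles attached along their entire boundary are exactly $\widetilde\chi=t$ in number, namely the homology facets. Removing them leaves a shellable complex that is contractible, and a shellable contractible complex is collapsible: run the shelling backwards to get collapses. (iii)$\Rightarrow$(v) is trivial. (v)$\Rightarrow$(i): take any collapse sequence after deleting triangles, and read the assignment off from which entry point in each variable gadget first becomes free. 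The gadget design must guarantee that no clause gadget can be collapsed unless a satisfying literal was opened first. For (iv), weak $K_3$-saturation of $L^{(1)}$ starting from a spanning tree is equivalent to adding edges so that each new edge closes a triangle of $L$, which is a collapse in reverse ("anti-collapse") of the $1$-skeleton. A spanning tree has $n-1$ edges, and $n-1$ is a trivial lower bound for connected $L^{(1)}$. So (iv) is equivalent to a spanning tree anti-collapsing, using a subset of the triangles, to all of $L^{(1)}$, which matches (v) after removing the unused triangles, and then (iii) via the Euler count.

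The main obstacle is (v)$\Rightarrow$(i): showing that an arbitrary, possibly adversarial collapse order cannot cheat through a clause gadget. I would first try an invariant argument. Identify a set of "blocking" edges in each clause gadget, each of degree at least $2$, and show that one becomes free only after an edge shared with a literal gadget is freed. Then use non-collapsibility of each gadget copy, for example the absence of free edges in a dunce-hat core, to force a consistent single choice per variable.
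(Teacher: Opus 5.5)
The paper gives no proof of this statement; it is quoted from~\cite{TT25}. Judged on its own, your outline has genuine gaps. Several of your general steps are fine: (ii)$\Rightarrow$(iii), by deleting the $\widetilde\chi$ homology facets of a shelling and reversing the remaining shelling into collapses; (iii)$\Rightarrow$(v), which is trivial; and (v)$\Rightarrow$(iii), by the Euler count. But everything that carries the NP-hardness is only announced: the gadgets, (i)$\Rightarrow$(ii), and above all (v)$\Rightarrow$(i).

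The construction you sketch is also inconsistent. Suppose each variable gadget is a copy of a contractible complex (dunce hat, Bing's house) and pieces are glued along contractible subcomplexes. Then $\widetilde\chi(A\cup B)=\widetilde\chi(A)+\widetilde\chi(B)$ at each gluing, so $\widetilde\chi(L_\phi)=0$ rather than $t$. Deleting $s$ triangles then leaves Euler characteristic $1-s$, so (iii) is impossible for $t\ge 1$, even when $\phi$ is satisfiable. The $t$ independent $2$-cycles have to be built into the gadgets. They are also exactly the forcing mechanism your (v)$\Rightarrow$(i) sketch lacks. A collapsible complex has $H_2=0$. So if $L_\phi$ has $t$ independent $2$-cycles with pairwise disjoint supports, one per variable gadget, the $t$ deleted triangles must meet each support exactly once and lie nowhere else. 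That is what pins down one choice per variable and forbids spending deletions inside clause gadgets. The absence of free edges in a dunce-hat core does not give this on its own.

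Your treatment of (iv) is also wrong as stated. A weak $K_3$-saturation step in $L^{(1)}$ may close a $3$-cycle of the graph that is not a $2$-face of $L$. Then the reversed sequence is not a collapse of any subcomplex of $L$. For instance, the $6$-vertex triangulation of $\mathbb{RP}^2$ has $1$-skeleton $K_6$, so $\operatorname{wsat}(L^{(1)},K_3)=5=n-1$. Yet $\chi(L)=1$ and $L$ has no free faces, so no deletion of triangles makes it collapsible. Complexes assembled from small dunce-hat triangulations, whose $1$-skeleta are nearly complete, face the same problem.

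The missing ingredient is flagness. In the source, $L_\phi$ is a barycentric subdivision (recorded here in Theorem~\ref{thm:TT}(i)), so every triangle of $L^{(1)}$ spans a face, and (iv)$\Leftrightarrow$(v) holds as in Lemma~\ref{lem:flag-collapse-wsat}. You can repair this part by building $K_\phi$ first and setting $L_\phi=\operatorname{sd}K_\phi$. Shellability passes to barycentric subdivisions, (v) transfers in both directions by Lemma~\ref{lem:subdivision-reflection}, and $\widetilde\chi$ is unchanged.
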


In this paper, we extend Tancer and Tyomkyn's result from $K_3$ to all complete graphs $K_r$ and balanced complete bipartite graphs $K_{r,r}$ with $r\ge 3$, by giving polynomial-time reductions from their NP-hard problem of deciding, input an $n$-vertex graph $F$, whether $\wsat(F,K_3)=n-1$, to the corresponding decision problems for $K_r$ and $K_{r,r}$. In particular, our proofs bring the \emph{flag-no-square} property (also known as \emph{$5$-large}), a widely studied property in topology that is of independent interest~\cite{DLMR26,Dra99,JS06,KNS25,KW16,KPP12,PS09}, into the study of weak saturation problems. This property also has many applications in other areas such as geometric group theory and combinatorial commutative algebra~\cite{AOS24,CDSS25,CKV16,CKV19,Gro87,Osa13}. More precisely, for a fixed graph $H$, we consider the following decision problem: given a finite graph $F$ and an integer $k$, decide whether $\wsat(F,H)\le k$. The following is our main result.

\begin{thm}\label{thm:intro-cliques}
For every fixed integer $r\ge 3$ and every $H\in\{K_r,K_{r,r}\}$, given a finite graph $F$ and an integer $k$ as input, it is NP-complete to decide whether $\wsat(F,H)\le k$.
\end{thm}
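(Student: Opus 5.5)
Membership in NP is the easy half. A certificate is a spanning subgraph $G\subseteq F$ with $|E(G)|\le k$, together with an ordering $e_1,\dots,e_m$ of $E(F)\setminus E(G)$. For fixed $r$, checking that $e_i$ lies in a copy of $H$ inside $G+\{e_1,\dots,e_i\}$ takes time $O(|V(F)|^{|V(H)|})$, which is polynomial. Choosing the order greedily (add any currently addable edge) does no harm, since weak saturation closure is monotone. So NP-hardness is the real content.

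For hardness I would reduce from the NP-hard problem in Theorem~\ref{thm:TT-ori}(iv): given the graph $L_\phi^{(1)}$ on $n$ vertices, decide whether $\wsat(L_\phi^{(1)},K_3)=n-1$. The plan is a polynomial construction $F\mapsto F_r$ (respectively $F\mapsto F_{r,r}$) with an explicit integer $k(F)$ such that $\wsat(F,K_3)=n-1$ if and only if $\wsat(F_r,K_r)\le k(F)$.

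\textbf{Cliques.} I would first try the join $F_r=F\vee K_{r-3}$, where the universal clique $A$ has $r-3$ vertices. Any copy of $K_r$ in $F_r$ contains at most three vertices of $F$, so it is exactly a triangle of $F$ joined with all of $A$. Keeping every edge incident to $A$ in $G$ costs $\binom{r-3}{2}+(r-3)n$ edges. With these present, adding an edge of $F$ creates a $K_r$ exactly when it creates a triangle in $F$. This gives $\wsat(F_r,K_r)\le \wsat(F,K_3)+\binom{r-3}{2}+(r-3)n$.

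The difficulty is the reverse inequality. An optimal $G$ might omit edges at $A$ and later recover them through copies of $K_r$ that use edges of $F$, which could save edges overall. To block this I would use the input structure. $L_\phi$ is connected and pure 2-dimensional, so $F$ is connected, and $\wsat(F,K_3)\ge n-1$ always holds because the final graph is connected. I would then prove a counting lemma: in any weakly saturating sequence, the number of edges at $A$ missing from $G$ is at most the number of additional edges of $F$ that $G$ must contain to compensate. I expect this to be the main obstacle. It is where I would bring in the flag-no-square ($5$-large) property: the join and the gadgets should be set up so that the clique complex of the new graph has no unexpected $K_r$'s and no induced $4$-cycles. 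That restricts which $K_r$ copies can ever become available. If $L_\phi$ itself does not have this property, I would first replace it by a barycentric-type subdivision, which preserves collapsibility and shellability and makes it flag-no-square. I would also add pendant gadgets, for example several copies of $A$ so that edges at $A$ are forced, making omission of $A$-edges strictly unprofitable. The target is the exact threshold $k=n-1+\binom{r-3}{2}+(r-3)n$ plus gadget costs.

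\textbf{Balanced bipartite graphs.} For $K_{r,r}$ I would use the analogous bipartite lifting. Take the bipartite double cover or subdivision of $F$, and join a set $B$ of about $r-2$ vertices on each side, so that every copy of $K_{r,r}$ corresponds to a triangle of $F$ (realised, say, as a $K_{2,2}$-type core via the flag-no-square subdivision) together with the common neighbourhood $B$. The argument then follows the same two steps: an upper bound by copying a $K_3$-saturating sequence, and a lower bound via the counting lemma, where the absence of induced squares prevents spurious copies of $K_{r,r}$. Both reductions are clearly polynomial for fixed $r$, and combined with Theorem~\ref{thm:TT-ori} they give NP-completeness.
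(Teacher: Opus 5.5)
\textbf{The clique case.} Your construction $F\nabla K_{r-3}$, the threshold $n-1+(r-3)n+\binom{r-3}{2}$ and the upper bound are exactly the paper's, and your ``counting lemma'' is the right statement. However, proving it is the whole content of the clique case. You leave it open and propose flag-no-square structure and pendant gadgets, neither of which is needed.

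The missing idea is a witness-edge injection, applied one cone vertex at a time. Let $a$ be a single cone vertex over $X$ with $\omega(X)\le q$, so every $K_{q+1}$ in $X\nabla K_1$ contains $a$, and let $J$ be optimal. For each $v$ with $av\notin J$, the step adding $av$ uses a clique $\{a\}\cup Q_v$, where $Q_v$ is a $q$-clique of $X$ through $v$. Set $f_v=vp(v)$ for some $p(v)\in Q_v\setminus\{v\}$.
\begin{itemize}
\item $f_v\in E(J)$: if $f_v$ had been added earlier, its new $K_{q+1}$ would contain $a$ and $v$, so $av$ would already be present.
\item The $f_v$ are pairwise distinct: if $f_u=f_v=uv$ and $au$ is added first, then $v\in Q_u$, so $av$ is present before $au$.
\end{itemize}
Deleting all $f_v$ and all edges at $a$ leaves $|E(J)|-|V(X)|$ edges. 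This graph is weakly $K_q$-saturated in $X$. Replay the process, replacing each addition of $av$ by $f_v$. At every moment the replay lacks only edges $f_w$ with $aw$ not yet present. Such edges never lie in the cliques used, whose vertices other than $v$ itself are all already joined to $a$. So adding $f_v$ completes $Q_v$, and each $X$-edge addition still sees its clique with $a$ removed. This gives $\wsat(X\nabla K_1,K_{q+1})=\wsat(X,K_q)+|V(X)|$, which you then iterate $r-3$ times.

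Your claim that every $K_r$ in $F\nabla K_{r-3}$ meets $F$ in a triangle also needs $\omega(F)\le 3$. That is not part of Theorem~\ref{thm:TT-ori}(iv). It holds because $L_\varphi$ is a barycentric subdivision and hence flag.

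\textbf{The $K_{r,r}$ case.} The construction you describe does not have the property you need.
\begin{itemize}
\item In the bipartite double cover, a triangle $uvw$ becomes the $6$-cycle $u_Lv_Rw_Lu_Rv_Lw_R$, not a $K_{2,2}$; its $4$-cycles come from $4$-cycles of $F$.
\item The subdivision of $F$ (its vertex--edge incidence graph) has no $4$-cycles at all.
\end{itemize}
So joining about $r-2$ vertices to each side does not make copies of $K_{r,r}$ correspond to triangles.

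The paper instead uses $\dB(G)$, the double cover with diagonal edges $v_Lv_R$, in which each triangle becomes a $K_{3,3}$. It then adds $r-3$ pairs $u,v$ together with the edge $uv$, one pair at a time. Flag-no-square is needed exactly to make $G$ $3$-clean: every $K_{3,3}$ of $\dB(G)$ then comes from a triangle, and $\dB(G)$ is $K_{3,4}$-free. This property comes from the special subdivision of Przytycki--{\'S}wi{\k a}tkowski, not from a barycentric one. For example, in $\sd$ of two triangles sharing an edge $uw$, with barycentres $b_1,b_2$, the cycle $u\,b_1\,w\,b_2$ is an induced square.

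Several further ingredients are also absent:
\begin{itemize}
\item The mixed-peeling identity $\mu_H=\lambda_H$. It is what lets you simulate deleting a diagonal edge, or an edge at an added vertex, by deleting a vertex in the smaller graph at no cost.
\item Invariance under subdivision of ``collapsible after deleting some $2$-faces''. You need this because you change the input complex, and plain invariance of collapsibility is not enough. The direction from the subdivision back to $L$ needs an Euler-characteristic count.
\item For flag complexes, the equivalence of that property with $\wsat(G,K_3)=n-1$.
\end{itemize}
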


The paper is organized as follows. Section~\ref{sec:prelim} introduces notations, definitions and some facts; Section~\ref{sec:cliques} introduces a lifting lemma and proves the result for complete graphs; Section~\ref{sec:balanced} proves the result for $K_{3,3}$ first and then lifts it to balanced bipartite graphs $K_{r,r}$ with $r\ge 4$; Section~\ref{sec:concluding} discusses the limits of our method in the case of $C_4$ and the case of $K_{s,t}$ where $s\neq t$.

\section{Preliminaries}\label{sec:prelim}

In this section we introduce the notations, definitions and some basic facts. We first introduce notations in graph theory. All graphs in this paper are finite and simple.
For a graph $G$, let $V(G)$ and $E(G)$ denote its vertex set and edge set, let $\omega(G)$ denote its clique number. For any vertex $v\in V(G)$, denote the neighborhood of $v$ in $G$ by $N_{G}(v)$. 
For two graphs $G$ and $H$, we say $G$ is $H$-free if $G$ contains no copy of $H$.
For two graphs $G_1$ and $G_2$, let $G_1\nabla G_2$ denote the graph join of $G_1$ and $G_2$. Namely, $G_1 \nabla G_2$ is obtained from the disjoint union of $G_1$ and $G_2$ by joining each vertex of $G_1$ to each vertex of $G_2$. 

Next, we recall the topological terminology needed in this paper. A \emph{simplicial complex} $C$ is a collection of finite vertex sets that is closed under taking subsets. Its elements are called \emph{faces} or \emph{simplices}; a face $\sigma$ has dimension $\dim\sigma=|\sigma|-1$, and the dimension of $C$ is the maximum dimension of its faces. A maximal face under inclusion is called a \emph{facet}, and $C$ is \emph{pure} if all facets have the same dimension. For a simplicial complex $C$, we denote $\chi(C)$ its Euler characteristic.

In this paper, we only consider finite $2$-dimensional simplicial complexes.
For such a complex $C$, let $C^{(k)}$ be the \emph{$k$-skeleton} of $C$, which is the subcomplex consisting of all faces of $C$ of dimension at most $k$. Note that $C^{(1)}$ is a graph.
A \emph{subdivision} of $C$ is a simplicial complex with the same underlying polyhedron whose simplices subdivide the simplices of $C$. 
For a finite pure $2$-dimensional simplicial complex $C$, the boundary $\partial C$ is the subcomplex generated by all edges of $C$ that are contained in exactly one $2$ dimensional face of $C$. 

\subsection{Flag complex and collapsibility}
We first introduce flag complex and barycentric subdivision.
\begin{dfn}[Flag complex]
A simplicial complex $L$ is called \emph{flag} if every clique in its $1$-skeleton $L^{(1)}$ spans a simplex of $L$.
\end{dfn}
The following is a simple observation which will be used frequently.
\begin{obs}\label{obs:flag-k4free}
If $L$ is a $2$-dimensional flag complex, then $L^{(1)}$ is $K_4$-free.
\end{obs}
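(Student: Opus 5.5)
The proof is a direct contradiction argument from the definition of a flag complex together with the dimension bound. Suppose, for contradiction, that $L^{(1)}$ contains a copy of $K_4$ on vertices $a,b,c,d$. Then $\{a,b,c,d\}$ is a clique in the $1$-skeleton $L^{(1)}$. Since $L$ is flag, every clique in $L^{(1)}$ spans a simplex of $L$, so $\{a,b,c,d\}\in L$.

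This face has $|\{a,b,c,d\}|-1=3$ as its dimension. The dimension of $L$ is the maximum dimension of its faces, so $\dim L\ge 3$. This contradicts the assumption that $L$ is $2$-dimensional, and hence $L^{(1)}$ is $K_4$-free.

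There is no real obstacle. The only point to check is that the flag condition is applied to the full $4$-vertex clique, and not only to its triangles.
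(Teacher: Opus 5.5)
Your proof is correct and follows essentially the same approach as the paper: the flag condition applied to the four pairwise adjacent vertices gives a $3$-simplex in $L$, contradicting $\dim L\le 2$.
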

\begin{proof}
Since $L$ is a flag complex, four pairwise adjacent vertices in $L^{(1)}$ would span a $3$-simplex in $L$, contradicting $\dim L\le 2$.
\end{proof}

\begin{dfn}[Barycentric subdivision]
Let $X$ be an (abstract) simplicial complex. The \emph{barycentric subdivision} $\sd(X)$ of $X$ is an (abstract)
simplicial complex defined as follows. The vertex set of $\sd(X)$ is the set of simplices $S(X)$ of $X$. A collection of simplices $\{\sigma_0,\sigma_1,...,\sigma_p\}$ of $X$ form a $p$-simplex of $\sd(X)$ precisely when they form a chain under inclusion.
\end{dfn}
It is easy to see that the barycentric subdivision $\sd C$ of every finite simplicial complex $C$ is flag. Indeed, the vertices of $\sd C$ are the simplices of $C$, and adjacency in $(\sd C)^{(1)}$ means comparability by inclusion.
A finite set of pairwise comparable simplices forms a chain, and chains precisely give the simplices in $\sd C$.

We next turn to the collapsibility of complexes. Recall that we have defined collapse and collapsibility in Section~\ref{Sec:Introduction}. In the following, we introduce some notation and useful facts. 
For two simplicial complexes $C_1$ and $C_2$, we write $C_1\searrow C_2$ if the simplicial complex $C_1$ collapses to the subcomplex $C_2$ by a finite sequence of collapses. By a \emph{triangulated disk} we mean a simplicial complex whose geometric realization is homeomorphic to a disk; such a triangulated disk $D$ is called \emph{endo-collapsible} if there exists a $2$-dimensional face $\sigma$ such that $(D\setminus{\sigma}) \searrow \partial D$, where only the face $\sigma$ is removed and all its proper faces are kept.

\begin{fact}[See~\cite{ST23}]\label{lem:subdivision-collapsible}
Let $C$ be a simplicial complex of dimension at most $2$ and $C^{\prime}$ be a subdivision of $C$. Then $C$ is collapsible if and only if $C^{\prime}$ is collapsible.
\end{fact}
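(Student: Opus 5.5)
The statement to prove is Fact~\ref{lem:subdivision-collapsible}: for a complex $C$ of dimension at most $2$ and a subdivision $C'$, $C$ is collapsible if and only if $C'$ is. My plan is to reduce both directions to homotopy and homology information, which is governed by the underlying polyhedron, together with a low-dimensional characterization of collapsibility. The key point is that in dimension $\le 2$, collapsibility behaves well with respect to refining the triangulation.

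For the forward direction I would proceed by induction on the number of elementary collapses in $C\searrow \mathrm{pt}$. Take an elementary collapse of a free face $\tau$ from a facet $\sigma$ in $C$. In $C'$, the simplex $\sigma$ is triangulated as a disk (if $\dim\sigma=2$) or an arc (if $\dim\sigma=1$). The face $\tau$ corresponds to a boundary edge-path or a boundary vertex of that disk or arc. Every triangulated $2$-disk collapses onto any boundary arc that is a proper subcomplex of its boundary circle. Likewise, every triangulated arc collapses onto one endpoint. Hence $C'$ collapses to the restriction of the subdivision to $C\setminus\{\tau,\sigma\}$, which is a subdivision of that smaller complex. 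Induction then brings $C'$ down to a subdivided point, namely a point. The step that needs care is the disk claim: a triangulated disk collapses to a proper boundary arc. I would prove it by repeatedly removing a triangle that meets the complementary boundary arc in an edge and does not disconnect the disk, using the standard ear-type argument for planar triangulations.

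The converse is the main obstacle, because collapses in $C'$ need not respect the cells of $C$. I would use the known characterization that a $2$-complex whose every collapse sequence gets stuck yields a non-collapsible subcomplex with no free faces. I would then argue by contradiction. Suppose $C$ is not collapsible. Collapse $C$ greedily to a subcomplex $K$ with no free faces; if $K$ is a point, the forward direction already finishes. Otherwise $K$ is a nonempty $2$-complex in which every edge lies in at least two triangles, or a $1$-complex with no leaves, and $C'\searrow K'$ by the forward argument.

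Then I would show that the subdivision $K'$ still has no free faces. Interior edges of a subdivided triangle lie in exactly two small triangles. Edges on an original edge $e$ lie in at least as many triangles as $e$ does. Subdivided graph edges keep vertex degree at least $2$.

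The remaining difficulty is that collapsing $C'$ by another route might avoid $K'$. I would try first the invariant stated in \cite{ST23}: in dimension $2$, a complex collapses to a point iff after some collapse sequence no free-face-free subcomplex remains. I would then verify that any collapse sequence of $C'$ can be reordered to first reach $K'$, or else cite \cite{ST23} directly for this step.
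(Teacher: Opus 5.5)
Your forward direction is sound in outline. The converse, however, has a genuine gap, and it sits exactly at the step you defer. Knowing that $C'\searrow K'$ with $K'$ free-face-free and not a point tells you nothing about other collapse sequences of $C'$. To conclude, you must prove that in dimension $2$, getting stuck does not depend on the order of collapses. This fails in dimension $3$, where a collapsible complex can collapse onto a dunce hat, so it is real content rather than bookkeeping. The ``invariant'' you quote also has the wrong quantifier: it needs ``every collapse sequence'', not ``some''.

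The missing idea is confluence of triangle collapses. If an edge $e$ lies in exactly one remaining triangle $t$, then $e$ stays present and free until $t$ itself is removed. While $t$ is present, $e$ can only be removed paired with $t$, and vertex--edge collapses never touch faces of triangles. Now compare two maximal collapse sequences at the first triangle removed by one but not by the other; this shows that all maximal sequences remove the same set of triangles. So if $C'$ were collapsible, the maximal sequence $C'\searrow K'$ would already have removed every triangle. Then $K'$ would be a leafless graph that is contractible (collapses preserve homotopy type), hence a point. Then $K$ is a point and $C$ is collapsible, a contradiction.

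Alternatively, you can drop the stuck subcomplex and simulate a collapse of $C'$ directly, as the paper does in the converse part of the proof of Lemma~\ref{lem:subdivision-reflection} with no faces removed. Order the triangles $\Delta$ of $C$ by the first time a small triangle inside $\Delta$ disappears. The free edge used at that moment must lie on an edge $e\subseteq\partial\Delta$, and $e$ is by then free in $C$, so you may collapse $\Delta$ along $e$. Finish with the Euler characteristic count, which shows the remaining connected graph is a tree. The paper itself only cites this Fact, so either argument would make your proof self-contained.

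A smaller point in the forward direction concerns the disk lemma: a triangulated $2$-disk collapses onto any boundary arc or boundary vertex. The lemma is true, but your proposed proof of it fails as stated, because a triangle meeting the complementary arc in an edge whose removal keeps a disk need not exist. Take the fan triangulation of the pentagon $v_1v_2v_3v_4v_5$ from $v_1$, with target arc $v_3v_2v_1v_5v_4$. The only candidate is $v_1v_3v_4$, and removing it leaves two triangles meeting only at $v_1$.

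Instead, induct on the statement for boundary arcs and boundary vertices together. When a removal splits $D$ into disks $D_1,D_2$ meeting at a boundary vertex $w$, there are two cases. If $w\in A$, collapse each $D_i$ onto $A\cap D_i$. Otherwise $A$ lies in one piece: collapse the other piece onto $w$ first, then the remaining piece onto $A$.
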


\begin{fact}[See~\cite{AB20}]\label{lem:disk-endo-collapsible}
In dimension $2$, all disks are endo-collapsible. 
\end{fact}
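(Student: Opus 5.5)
The statement just made is Fact~\ref{lem:disk-endo-collapsible}: every triangulated $2$-disk $D$ is endo-collapsible, i.e.\ removing the interior of a suitable triangle $\sigma$ lets $D\setminus\sigma$ collapse onto $\partial D$. I will argue by induction on the number of triangles of $D$, using shelling-type moves. The base case is a single triangle $\sigma$. Then $D\setminus\sigma$ is exactly the cycle $\partial\sigma=\partial D$, so nothing needs to be collapsed.

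For the inductive step I use the classical fact that every triangulated $2$-disk is shellable. Take a shelling order $F_1,\dots,F_m$ with $m\ge2$, and let $D'=F_1\cup\dots\cup F_{m-1}$, which is again a disk. The last triangle $F_m$ meets $D'$ in either one edge or a path of two edges. (The intersection cannot be all of $\partial F_m$, since $D$ would then fail to be a disk.) By induction, $D'$ has a triangle $\sigma$, which I may take to be a triangle other than $F_m$, such that $D'\setminus\sigma\searrow\partial D'$. I want to show $D\setminus\sigma\searrow\partial D$. The collapses of $D'\setminus\sigma$ are local to $D'$, and they never remove edges of $\partial D'$. The extra triangle $F_m$ only contains edges and vertices of $\partial D'$ besides its own free edges, so it never destroys a free-face condition inside $D'$. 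Hence the same collapse sequence can be performed in $D\setminus\sigma$, and it leaves $\partial D'\cup F_m$.

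Now $\partial D'\cup F_m$ is the cycle $\partial D$ with the triangle $F_m$ glued along one or two of its edges, and I finish in the two cases.

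\textbf{Case 1: $F_m\cap D'$ is one edge $e$.} The other two edges of $F_m$ lie on $\partial D$. Here $e$ is a free face of $F_m$ in $\partial D'\cup F_m$. Collapsing $F_m$ through $e$ removes $e$ and $F_m$, which leaves exactly $\partial D$.

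\textbf{Case 2: $F_m\cap D'$ is a path $ab,bc$.} In $D$ the vertex $b$ is interior, and $ac$ lies on $\partial D$. I collapse $F_m$ through $ab$, which leaves the edge $bc$ hanging at the vertex $b$. Since $b$ lies on no other remaining edge, $b$ is a free face of $bc$, so I collapse $bc$ through $b$. What remains is $\partial D$.

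The main obstacle is the claim that the collapse of $D'\setminus\sigma$ lifts verbatim to $D\setminus\sigma$. One must check that no intermediate collapse uses a free face that becomes non-free because of $F_m$. Faces of $F_m$ that are shared with $D'$ lie in $\partial D'$, and these are never removed, so this should hold. I would still verify it carefully, or else reorder the argument: first remove $F_m$ from $D$ by an elementary collapse, as in the two cases above, and then apply induction to $D'$.
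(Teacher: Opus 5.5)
Your proof is correct. The paper gives no argument for this fact and simply imports it from~\cite{AB20}. What you wrote is the standard justification behind such a citation: the $2$-dimensional case of ``shellable balls are endo-collapsible'', combined with the classical theorem that every triangulated $2$-disk is shellable. So you replace the citation by that classical input, plus the routine fact that initial segments of a shelling of a $2$-disk are again disks. Everything else in your argument is self-contained.

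The step you single out as the main obstacle is already settled by your own remark, and you can state it as a proof rather than a hope. Each edge of $F_m\cap D'$ lies in $F_m$ and in some triangle of $D'$. Since $D$ is a surface with boundary, it lies in exactly one triangle of $D'$, so the closure of $F_m$ meets $D'$ inside $\partial D'$. A collapse $D'\setminus\sigma\searrow\partial D'$ never removes a face of $\partial D'$, so no face of $F_m$ ever contains a face being removed. Hence every free face stays free in $D\setminus\sigma$, and the sequence lifts verbatim. In Case~2, the claim that $b$ lies on no other remaining edge uses that $\partial D'$ is a circle, so $b$ has degree two in it.

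Drop the fallback you mention at the end, because it would not work. Removing $F_m$ first, so as to be left with $D'\setminus\sigma$, deletes the edges of $F_m$ not in $D'$. These are edges of $\partial D$ that must survive, and the induction hypothesis then only brings you down to $\partial D'\neq\partial D$. The order you actually use is the right one: collapse $D'\setminus\sigma$ inside $D\setminus\sigma$ first, and dispose of $F_m$ last.
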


\begin{fact}\label{lem:collapse-euler}
Collapse does not change the Euler characteristic. 
\end{fact}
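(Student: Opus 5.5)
The statement to prove is Fact~\ref{lem:collapse-euler}: collapses do not change the Euler characteristic. I will work with $\chi(C)=\sum_{i\ge 0}(-1)^i f_i(C)$, where $f_i(C)$ is the number of $i$-dimensional faces of $C$. Since every collapse is a finite sequence of elementary collapses, and a sequence of collapses is a composition of single ones, it suffices to show that one elementary collapse leaves $\chi$ unchanged. Induction on the number of elementary collapses then gives $\chi(C_1)=\chi(C_2)$ whenever $C_1\searrow C_2$.

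The first step is to identify exactly which faces an elementary collapse removes. Let $\tau$ be a free face properly contained in the unique maximal face $\sigma$, with $\dim\tau=\dim\sigma-1=d-1$. The faces $\gamma$ with $\tau\subseteq\gamma\subseteq\sigma$ are precisely $\tau$ and $\sigma$, because $|\sigma\setminus\tau|=1$. So the collapse deletes one face of dimension $d$ and one of dimension $d-1$. The change in $\chi$ is therefore $-(-1)^d-(-1)^{d-1}=0$.

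For completeness, I will also check that the result is a simplicial complex, so that the induction is legitimate. Suppose $\rho$ remains and some face of $\rho$ is removed. Then $\rho$ contains $\tau$, and since $\tau$ lies only in faces contained in $\sigma$ (by freeness, every face containing $\tau$ sits inside the unique maximal face $\sigma$), $\rho$ is $\tau$ or $\sigma$. This contradicts the assumption that $\rho$ remains.

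Alternatively, one can treat a general collapse directly. The faces between $\tau$ and $\sigma$ correspond to the subsets of $\sigma\setminus\tau$, a nonempty set of size $m$. The signed count of removed faces is $\pm\sum_{j=0}^{m}(-1)^j\binom{m}{j}=\pm(1-1)^m=0$. There is no real obstacle here; the only point needing care is the identification of the removed faces, which is where freeness of $\tau$ is used.
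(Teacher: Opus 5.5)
Your proof is correct and follows the paper's argument. You reduce to elementary collapses, each of which removes exactly two faces of consecutive dimensions whose contributions to the alternating sum cancel; your binomial-sum variant $(1-1)^m=0$ also handles a general collapse directly, without needing the decomposition into elementary collapses.
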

\begin{proof}
    Recall that a collapse can be decomposed into a sequence of elementary collapses. An elementary collapse removes exactly two faces whose dimensions differ by one, and hence their contributions to the alternating sum in the definition of Euler characteristic cancel. Therefore a sequence of elementary collapses preserves the Euler characteristic, which implies the fact.
\end{proof}

With these in mind, we extract the following consequence from Theorem~\ref{thm:TT-ori} and its proof in \cite{TT25}, which will be used later.

\begin{thm}[Tancer and Tyomkyn~\cite{TT25}]\label{thm:TT}
There is a polynomial-time construction which maps any $3$-CNF formula $\varphi$ to a finite connected pure $2$-dimensional simplicial complex $L_\varphi$ with the following properties:
\begin{enumerate}[label=\textup{(\roman*)}]
\item $L_\varphi$ is the barycentric subdivision of some $2$-dimensional complex, and hence $L_\varphi$ is flag;
\item $\varphi$ is satisfiable if and only if $L_\varphi$ becomes collapsible after deleting a set of $2$-dimensional faces;
\item $\varphi$ is satisfiable if and only if $\wsat(L_\varphi^{(1)},K_3)=|V(L_\varphi)|-1.$
\end{enumerate}
Consequently, given a connected graph $F$ on $n$ vertices, deciding whether $\wsat(F,K_3)=n-1$ is NP-hard.
\end{thm}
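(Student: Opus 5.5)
The statement to justify is Theorem~\ref{thm:TT}: a repackaging of Theorem~\ref{thm:TT-ori} together with facts about the complex $L_\varphi$ built in~\cite{TT25}. I will not redo the NP-hardness reduction. Instead I will read properties (i)--(iii) off the construction and off the equivalences (i)$\Leftrightarrow$(iv)$\Leftrightarrow$(v) of Theorem~\ref{thm:TT-ori}, and then derive the complexity consequence.

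For property (i), I would inspect the Tancer--Tyomkyn construction. Their gadgets are built as triangulated $2$-complexes, and the final complex is taken as (or can be replaced by) a barycentric subdivision of a coarser complex $M_\varphi$. If their output is literally not a barycentric subdivision, I would set $L_\varphi:=\sd(M_\varphi)$ and check that the equivalences survive.
\begin{itemize}
\item Connectedness, purity and dimension are preserved under subdivision.
\item Fact~\ref{lem:subdivision-collapsible} transfers collapsibility between $M_\varphi$ and $\sd M_\varphi$.
\item For collapsibility after deleting triangles, deleting the triangles of $M_\varphi$ corresponds to deleting one small triangle from each subdivided triangle in $\sd M_\varphi$. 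In the other direction, deleting some small triangles inside a subdivided disk can be handled by Fact~\ref{lem:disk-endo-collapsible}: a subdivided triangle with a hole collapses onto its boundary, and hence onto the boundary of the coarse triangle.
\end{itemize}
Flagness then follows from the remark after the definition of $\sd$: adjacency means comparability, and pairwise comparable simplices form a chain.

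Property (ii) is exactly (i)$\Leftrightarrow$(v) of Theorem~\ref{thm:TT-ori}, and property (iii) is (i)$\Leftrightarrow$(iv). If a subdivision was introduced in the previous step, I would recheck (iii) for the subdivided complex via the link between $K_3$-weak saturation and collapsibility used in~\cite{TT25}. That link works as follows. In a flag $2$-complex, triangles of $L^{(1)}$ are exactly the $2$-faces, by Observation~\ref{obs:flag-k4free} and flagness. So adding a missing edge that creates a triangle is the reverse of an elementary collapse of a free edge into a $2$-face. A spanning tree with $n-1$ edges then corresponds to collapsing $L_\varphi$, minus some triangles, down to a tree and then to a point. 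Fact~\ref{lem:collapse-euler} pins down the count, since $\widetilde\chi(L_\varphi)=t$ forces exactly $t$ triangles to be removed.

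For the consequence, the map $\varphi\mapsto F=L_\varphi^{(1)}$ is polynomial and $F$ is connected. By (iii), $\varphi$ is satisfiable if and only if $\wsat(F,K_3)=n-1$, so 3-SAT reduces to this question and it is NP-hard.

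The main obstacle is confirming (i) precisely: that the Tancer--Tyomkyn complex is a barycentric subdivision, or that (iii) is preserved when one is taken. I would first look for an explicit subdivision step in their construction.
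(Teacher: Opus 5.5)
Your proposal matches the paper: Theorem~\ref{thm:TT} is not reproved there but extracted from Theorem~\ref{thm:TT-ori} and the construction in~\cite{TT25}. In both, (ii) and (iii) are the equivalences (i)$\Leftrightarrow$(v) and (i)$\Leftrightarrow$(iv), property (i) is read off the construction, and hardness follows from the reduction $\varphi\mapsto L_\varphi^{(1)}$.

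Your fallback of passing to $\sd(M_\varphi)$ is only a contingency. Its converse direction needs more than Fact~\ref{lem:disk-endo-collapsible}, though: a coarse triangle with two or more deleted small triangles does not collapse onto its boundary. The paper's analogous Lemma~\ref{lem:subdivision-reflection} handles this with an ordering argument and the Euler-characteristic count $r\le s$.
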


\subsection{Weak saturation promblem and its opposite perspective}
For the weak saturation promblem, we have the following results.
\begin{obs}\label{lem:np-membership}
For every fixed graph $H$, the $H$-weak saturation decision problem belongs to NP.
\end{obs}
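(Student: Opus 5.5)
We show that the problem lies in NP by exhibiting a polynomial-size certificate that can be checked in polynomial time. An instance is a graph $F$ on $n$ vertices together with an integer $k$. The certificate has three parts: a spanning subgraph $G\subseteq F$ with $|E(G)|\le k$; an ordering $e_1,\dots,e_m$ of the edges in $E(F)\setminus E(G)$; and, for each $i$, an injective map $\phi_i\colon V(H)\to V(F)$ witnessing a copy of $H$. This copy must contain $e_i$ and lie inside $G_i:=G+\{e_1,\dots,e_i\}$. The certificate has size $O(|E(F)|+m\,|V(H)|)$, which is polynomial in the input size since $H$ is fixed. If $k\ge |E(F)|$ we may simply take $G=F$ with the empty ordering, so we can assume all numbers involved are at most $|E(F)|$.

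\textbf{Verification.} The verifier performs the following checks.
\begin{enumerate}
\item $G$ is a spanning subgraph of $F$ with at most $k$ edges.
\item The listed edges are exactly $E(F)\setminus E(G)$, each appearing once.
\item For each $i$, the map $\phi_i$ is injective, and some edge $xy\in E(H)$ satisfies $\{\phi_i(x),\phi_i(y)\}=e_i$.
\item For each $i$ and every $uv\in E(H)$, we have $\phi_i(u)\phi_i(v)\in E(G_i)$.
\end{enumerate}
For check 4, the verifier keeps an adjacency matrix updated as edges are added. Each step then costs $O(|E(H)|)$, so the whole check runs in time $O(n^2+m|E(H)|)$.

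\textbf{Correctness.} If the verifier accepts, then every $e_i$ lies in a copy of $H$ in $G_i$. Hence $G$ is weakly $H$-saturated in $F$, and $\wsat(F,H)\le |E(G)|\le k$. Conversely, suppose $\wsat(F,H)\le k$. Take a minimum weakly $H$-saturated $G$ together with its defining ordering, and choose the copies of $H$ guaranteed by the definition. This data forms a certificate that the verifier accepts.

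Alternatively, one can avoid the maps $\phi_i$ entirely. The verifier can search for a copy of $H$ containing $e_i$ by brute force in time $O(n^{|V(H)|})$, which is polynomial because $H$ is fixed. With this approach the certificate consists of $G$ and the ordering alone.

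No step presents a real obstacle. The only point requiring care is that polynomiality uses the fact that $H$ is fixed: the cost $O(n^{|V(H)|})$, or equivalently the size of the witness maps, must not grow with the input.
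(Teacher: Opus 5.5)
Your proposal is correct and is essentially the paper's proof. The paper uses as certificate the initial edge set together with an ordering of the missing edges, and verifies each step by brute-force enumeration of copies of $H$, which is polynomial because $H$ is fixed; this is exactly your alternative version. Your explicit witness maps $\phi_i$ are only an optional refinement that makes each step's check cheaper.
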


\begin{proof}
Let $F$ be the host graph. A certificate of the $H$-weak saturation decision problem consists of an initial edge set $E_0\subseteq E(F)$ with $|E_0|\le k$, together with an ordering of $E(F)\setminus E_0$.
Since $H$ is fixed, at each step one can check in polynomial time, by enumerating the possible copies of $H$ in $F$, whether the newly added edge lies in a new copy of $H$.
\end{proof}

\begin{lem}\label{lem:connected-lower}
Let $F$ be a  connected graph, then every weakly $K_3$-saturated subgraph of $F$ is connected.
Consequently,
$$
    \wsat(F,K_3)\ge |V(F)|-1.
$$
\end{lem}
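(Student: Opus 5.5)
The plan is to show that the number of connected components cannot drop during the saturation process, and then apply the standard spanning-graph bound.

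Let $G\subseteq F$ be weakly $K_3$-saturated in $F$. Fix an ordering $e_1,\ldots,e_m$ of $E(F)\setminus E(G)$ witnessing this, and set $G_i=G+\{e_1,\ldots,e_i\}$, so that $G_0=G$ and $G_m=F$.

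The key step is that adding $e_i=uv$ to $G_{i-1}$ does not merge two components. By definition, $e_i$ lies in a copy of $K_3$ in $G_i$. Call the third vertex $w$. The other two edges $uw$ and $wv$ of this triangle are distinct from $e_i$, so they already belong to $G_{i-1}$. Hence $u$ and $v$ are joined by the path $u w v$ in $G_{i-1}$ and lie in the same component. Adding $e_i$ therefore leaves the vertex partition into components unchanged.

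Since all the $G_i$ share the vertex set $V(F)$, induction on $i$ shows that $G_0$ and $G_m$ have exactly the same components. As $G_m=F$ is connected, $G=G_0$ is connected. A connected graph on $|V(F)|$ vertices has at least $|V(F)|-1$ edges, because it contains a spanning tree. Taking the minimum over all weakly $K_3$-saturated $G$ gives $\wsat(F,K_3)\ge |V(F)|-1$.

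There is no real obstacle here. The only point needing care is that the two other triangle edges are genuinely present before $e_i$ is added, and this holds because a triangle through $e_i$ uses $e_i$ exactly once.
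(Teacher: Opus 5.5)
Your proof is correct and follows essentially the same argument as the paper. In both, the third vertex $w$ of the triangle through $e_i=uv$ gives a path $uwv$ already present in $G_{i-1}$, so the components never change; since $F$ is connected, $G$ is connected and therefore has at least $|V(F)|-1$ edges.
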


\begin{proof}
Let $H$ be a weakly $K_3$-saturated subgraph of $F$. Then there is an ordering $e_1,e_2,\ldots,e_m$ of the edges in $E(F)\setminus E(H)$ such that, if $H_0=H$ and $H_i=H_{i-1}+e_i$ for $1\le i\le m$, then the addition of $e_i$ creates a new copy of $K_3$ in $H_i$.

We claim that the connectivity does not change during this process. Indeed, write $e_i=u_iv_i$. Since adding $e_i$ creates a new $K_3$, there exists a vertex $w_i$ such that both $u_iw_i$ and $v_iw_i$ are already edges of $H_{i-1}$. Hence $u_i$ and $v_i$ lie in the same connected component of $H_{i-1}$. Therefore $H_i$ and $H_{i-1}$ have the same number of connected components for every $i$.

It follows by induction that $H$ and $F=H_m$ have the same number of connected components. Since $F$ is connected, $H$ is connected. Therefore every weakly $K_3$-saturated subgraph of $F$ has at least $|V(F)|-1$ edges, and hence $\wsat(F,K_3)\ge |V(F)|-1.$
\end{proof}
The weak saturation problem can also be viewed from the opposite perspective.
\begin{dfn}\label{def:lambda}
For finite graphs $F$ and $H$, let $\lambda_H(F)$ be the maximum length of an edge-deletion sequence starting from $F$ in which each deleted edge belongs, at the moment of deletion, to a copy of $H$ in the current graph.
We call such a sequence an \emph{$H$-edge peeling sequence}.
\end{dfn}

\begin{lem}\label{lem:reverse-deletion}
For all finite graphs $F$ and $H$,
$$
    \wsat(F,H)=|E(F)|-\lambda_H(F).
$$
\end{lem}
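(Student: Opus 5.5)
The plan is to show that the two processes are time-reversals of each other, so that weakly $H$-saturated subgraphs correspond exactly to complements of $H$-edge peeling sequences. I will prove the two inequalities $\wsat(F,H)\le |E(F)|-\lambda_H(F)$ and $\wsat(F,H)\ge |E(F)|-\lambda_H(F)$ separately. Both directions use the same correspondence on the graphs that occur along the way, so I will state that correspondence first.

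\textbf{Key correspondence.} Let $G\subseteq F$ be spanning and let $e\in E(F)\setminus E(G)$. Then the following two statements are equivalent.
\begin{enumerate}[label=(\alph*)]
\item Adding $e$ to $G$ creates a new copy of $H$, meaning there is a copy of $H$ in $G+e$ that contains $e$.
\item In the graph $G+e$, the edge $e$ lies in a copy of $H$, so $e$ is a legal deletion from $G+e$ in the peeling sense.
\end{enumerate}
These are literally the same condition, namely that some copy of $H$ in $G+e$ uses $e$. The word ``new'' only means that the copy contains $e$, which is how the paper's definition should be read.

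\textbf{Upper bound.} Take an $H$-edge peeling sequence $e_1,\dots,e_\lambda$ from $F$ of maximum length $\lambda=\lambda_H(F)$. Set $F_0=F$ and $F_i=F_{i-1}-e_i$, and let $G=F_\lambda$, which has $|E(F)|-\lambda$ edges.
\begin{itemize}
\item Add the edges back to $G$ in the reverse order $e_\lambda,e_{\lambda-1},\dots,e_1$.
\item When $e_i$ is added, the current graph is $F_{i-1}$.
\item By the peeling condition, $e_i$ lies in a copy of $H$ in $F_{i-1}$, which is exactly condition (a).
\end{itemize}
Since all missing edges get added, $G$ is weakly $H$-saturated in $F$, and so $\wsat(F,H)\le |E(F)|-\lambda_H(F)$.

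\textbf{Lower bound.} Take a weakly $H$-saturated $G$ with $|E(G)|=\wsat(F,H)$, together with its addition order $e_1,\dots,e_m$, where $m=|E(F)|-|E(G)|$. Set $G_i=G+\{e_1,\dots,e_i\}$, so that $G_m=F$.
\begin{itemize}
\item Delete the edges from $F$ in the order $e_m,e_{m-1},\dots,e_1$.
\item When $e_i$ is deleted, the current graph is $G_i$.
\item Since $e_i$ lies in a copy of $H$ in $G_i$ by the saturation condition, every deletion is legal.
\end{itemize}
This is a peeling sequence of length $m$, so $\lambda_H(F)\ge m$, and hence $\wsat(F,H)\ge |E(F)|-\lambda_H(F)$.

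\textbf{Main obstacle.} There is no real technical difficulty here. The only point that needs care is fixing the interpretation of ``creating a new copy of $H$''. It must mean a copy of $H$ in the enlarged graph that contains the added edge, so that condition (a) matches the peeling condition exactly. With that reading, combining the two inequalities gives the stated equality.
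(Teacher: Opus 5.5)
Your proposal is correct and takes the same approach as the paper. Both directions come from reversing either a weak saturation order or an $H$-edge peeling sequence, using that the two legality conditions coincide: in each case the edge must lie in a copy of $H$ in the larger of the two consecutive graphs.
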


\begin{proof}
Let $G\subseteq F$ be weakly $H$-saturated with edge addition order $e_1,\ldots,e_m$, then deleting $e_m,\ldots,e_1$ from $F$ gives an $H$-edge peeling sequence of length $m$.
Conversely, reversing any $H$-edge peeling sequence gives a valid weak saturation addition order.
Thus maximizing the number of deleted edges is equivalent to minimizing the number of initial edges.
\end{proof}

\section{Complete graphs}\label{sec:cliques}

Before proving Theorem~\ref{thm:intro-cliques} for complete graphs, we will show a lifting lemma first.
\begin{lem}\label{lem:clique-cone}
Let $q\ge 3$, and let $X$ be a finite graph with $\omega(X)\le q$.
Let $\widehat X=X\nabla K_1,$
and denote the new vertex in $V(\widehat X)\setminus V(X)$ by $a$.
Then
\[
    \wsat(\widehat X,K_{q+1})=\wsat(X,K_q)+|V(X)|.
\]
\end{lem}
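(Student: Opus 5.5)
We work throughout with the peeling formulation of Lemma~\ref{lem:reverse-deletion}. Since $|E(\widehat X)|=|E(X)|+|V(X)|$, the claimed identity is equivalent to
\[
\lambda_{K_{q+1}}(\widehat X)=\lambda_{K_q}(X).
\]
The structural input is the following. Because $\omega(X)\le q$, no subgraph of $X$ contains a $K_{q+1}$. Hence every copy of $K_{q+1}$ in a subgraph $G\subseteq \widehat X$ has the form $Q\cup\{a\}$, where $Q$ is a $K_q$ in $G-a$ and all spokes $ay$ with $y\in Q$ are present in $G$. We call the edges $ay$ spokes.

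For the lower bound $\lambda_{K_{q+1}}(\widehat X)\ge\lambda_{K_q}(X)$, take an optimal $K_q$-edge peeling sequence of $X$ and perform the same deletions in $\widehat X$, never deleting a spoke. When an edge $e$ is deleted, it lies in a $K_q$ $Q$ of the current $X$-part. Then $Q\cup\{a\}$ is a $K_{q+1}$ containing $e$, because all spokes are still present.

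For the upper bound, fix a $K_{q+1}$-edge peeling sequence $S$ of $\widehat X$. We build a $K_q$-edge peeling sequence $S'$ of $X$ with the same length, step by step, as follows.
\begin{enumerate}[label=(\alph*)]
\item An $X$-edge deleted by $S$ is deleted by $S'$ at the same moment.
\item A spoke $ax$ deleted by $S$ at some moment is replaced in $S'$ by a \emph{replacement edge} $xy$. To choose it, note that $ax$ lies in a $K_{q+1}$ $Q\cup\{a\}$, with $Q\ni x$ a $K_q$ in the current $X$-part and all spokes to $Q$ present. We take $xy$ to be any edge of $Q$ at $x$, which exists since $q\ge 2$.
\end{enumerate}

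The main point is to check that $S'$ stays valid and deletes distinct edges. We maintain the following invariant: at every moment, the $X$-part of $S'$ equals the $X$-part of $S$ minus the replacement edges chosen so far. Moreover, every replacement edge is incident to a vertex whose spoke has already been deleted in $S$.

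Now consider any step of $S$, whether it deletes an $X$-edge or a spoke. It uses a clique $Q\cup\{a\}$ in which every vertex of $Q$ still has its spoke. Therefore no replacement edge lies inside $Q$, so $Q$ is intact in the current graph of $S'$. This makes the $S'$-step legal in both cases:
\begin{itemize}
\item if $S$ deletes an $X$-edge $e\subseteq Q$, then $e$ lies in the $K_q$ $Q$ of the current graph of $S'$;
\item if $S$ deletes a spoke $ax$, then the replacement edge $xy\subseteq Q$ lies in the $K_q$ $Q$, which is present.
\end{itemize}

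It remains to see that no edge is deleted twice in $S'$. The edges deleted by $S$ itself are distinct, and an edge of $Q$ is never an earlier replacement edge, since no replacement edge lies inside $Q$. Consider a replacement edge $xy$ chosen when the spoke $ax$ is deleted. Every later step of $S$ uses a clique all of whose vertices still have their spokes, so it never involves $x$; in particular, $S$ never deletes $xy$ afterwards. Hence $S'$ is a $K_q$-edge peeling sequence of $X$ of length $|S|$, which gives $\lambda_{K_{q+1}}(\widehat X)\le\lambda_{K_q}(X)$.

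I expect the bookkeeping in this exchange argument to be the main obstacle. The key observation is that any clique used by $S$ at a given moment consists only of vertices whose spokes are still alive, so it can never contain an edge that $S'$ has already removed as a replacement.
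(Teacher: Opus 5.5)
Your proof is correct, and it is essentially the paper's argument run in reverse through Lemma~\ref{lem:reverse-deletion}. Your replacement edge $xy$, chosen when the spoke $ax$ is deleted, is exactly the paper's $f_v=vp(v)$, chosen when $av$ is added. Your invariant that every clique used by $S$ consists of vertices with live spokes is the time-reversed form of three checks in the paper: that $f_v\in E(J)$, that the $f_v$ are pairwise distinct, and that all of $Q_v$ is active when $av$ is added.
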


\begin{proof}
For the upper bound, let $G\subseteq X$ be a minimum weakly $K_q$-saturated spanning subgraph.
In $\widehat X$, take as initial edges all edges of $G$ together with all cone edges $av$, $v\in V(X)$.
Then add the missing edges of $X$ in a weak $K_q$-saturation order for $G$.
Each new $K_q$ in $X$ becomes a new $K_{q+1}$ after adjoining $a$.
Thus
\[
    \wsat(\widehat X,K_{q+1})\le \wsat(X,K_q)+|V(X)|.
\]

For the lower bound, let $J\subseteq \widehat X$ be a minimum weakly $K_{q+1}$-saturated spanning subgraph.
Since $\omega(X)\le q$, every copy of $K_{q+1}$ in $\widehat X$ contains $a$.
Let $A_0=N_J(a)$ and $U=V(X)\setminus A_0$.
We call a vertex $v\in V(X)$ \emph{active} if $v\in A_0$ or $av$ has been added.
For each $v\in U$, consider the first time at which $av$ is added.
The new $K_{q+1}$ containing $av$ has the form $\{a\}\cup Q_v$, where $Q_v$ is a $q$-clique in $X$ containing $v$.
Choose a vertex $p(v)\in Q_v\setminus\{v\}$ and set $f_v=vp(v)$.
The edge $f_v$ is present before $av$ is added.
Moreover, $f_v$ must be an edge of $J$: if $f_v$ had been added earlier, then the new $K_{q+1}$ containing $f_v$ would also contain $a$, and hence the edge $av$ would already have been added at that earlier time, a contradiction.
The edges $f_v$, $v\in U$, are pairwise distinct.
Indeed, if $f_u=f_v=uv$ and $au$ is added before $av$, then $v\in Q_u$, so $av$ is already added before $au$ is added, contradicting the assumed order of first appearances.

Let $J_0=J[V(X)]\setminus\{f_v:v\in U\}$ be a subgraph of $X$, then $|E(J_0)|=|E(J)|-|A_0|-|U|=|E(J)|-|V(X)|.$
We show that $J_0$ is weakly $K_q$-saturated in $X$ by projecting the weak saturation process from $\widehat X$ to $X$. 
More precisely, we define the projected process as follows. Each edge in $X$ is added unchanged, while each cone edge $av$ is replaced by the edge $f_v$.
When the projected process adds an edge $e$ of $X$, this corresponds to a step in the original process at which the same edge $e$ is added. At that step, there is a newly created copy of $K_{q+1}$ in $\widehat X$ containing $e$. Since all vertices of this copy that lie in $X$ are active at that moment, deleting the cone vertex $a$ yields a copy of $K_q$ in $X$ containing $e$. When the projected process adds $f_v$, this corresponds to the step in the original process at which the cone edge $av$ is added. At that moment, all vertices of $Q_v$ are active, and hence every edge of $Q_v$ other than $f_v$ has already appeared in the projected process. Therefore adding $f_v$ completes a copy of $K_q$ on $Q_v$ in $X$.
Hence $J_0$ is weakly $K_q$-saturated in $X$.
Therefore
\[
    \wsat(\widehat X,K_{q+1})=|E(J)|=|E(J_0)|+|V(X)|\ge \wsat(X,K_q)+|V(X)|,
\]
which proves the lower bound.
\end{proof}

Now we are ready to prove the Theorem~\ref{thm:intro-cliques} for complete graphs by using the lifting lemma we proved above.
\begin{proof}[\bf Proof of Theorem~\ref{thm:intro-cliques} for case $K_r$]
Let $\varphi$ be a $3$-CNF formula, and let $L_\varphi$ be the finite connected pure $2$-dimensional simplicial complex from Theorem~\ref{thm:TT}. Let $X_\varphi=L_\varphi^{(1)}$ and set $n=|V(X_\varphi)|$.
By Theorem~\ref{thm:TT} and Lemma~\ref{lem:connected-lower}, we have that the following statements are equivalent.
\begin{enumerate}[(1)]
    \item $\varphi$ is satisfiable.
    \item $\wsat(X_\varphi,K_3)=n-1.$
    \item $\wsat(X_\varphi,K_3)\le n-1.$
\end{enumerate}
Since $L_\varphi$ is a $2$-dimensional flag complex, Corollary~\ref{obs:flag-k4free} implies that $\omega(X_\varphi)\le 3$.
For fixed $r\ge 3$, let $F_\varphi=X_\varphi\nabla K_{r-3}$ and set $N=|V(F_\varphi)|=n+r-3$. 
By applying Lemma~\ref{lem:clique-cone} iteratively, note that the clique number increases by exactly one at each join step, we obtain 
$$
    \wsat(F_\varphi,K_r)
    =
    \wsat(X_\varphi,K_3)+(r-3)n+\binom{r-3}{2}.
$$
Hence, $\varphi$ is satisfiable if and only if $\wsat(F_\varphi,K_r)
 \le (n-1)+(r-3)n+\binom{r-3}{2} = (r-2)N-\binom{r-1}{2}.$ This gives a polynomial-time reduction from $3$-SAT to our decision problem. Together with Lemma~\ref{lem:np-membership}, the result follows.
\end{proof}

\section{Balanced complete bipartite graphs}\label{sec:balanced}
In this section, we prove our complexity result for balanced complete bipartite graphs $K_{r,r}$ with $r\ge 3$. We first establish the case $K_{3,3}$. Starting from the problem in Theorem~\ref{thm:TT}, namely deciding, given an $n$-vertex graph $F$, whether $\wsat(F,K_3)=n-1$, we give a polynomial-time reduction to the corresponding decision problem for $K_{3,3}$. The reduction is based on an auxiliary bipartite graph $\dB(G)$ constructed from a graph $G$. We introduce a structural property of graphs, called \emph{$3$-clean}, and prove that, whenever $G$ is $3$-clean,
$$
\wsat(\dB(G),K_{3,3})=\wsat(G,K_3)+|V(G)|+|E(G)|.
$$
To apply this identity to the instances arising from Theorem~\ref{thm:TT}, we show that the simplicial complex $L_\varphi$ appearing there admits a suitable subdivision that is flag-no-square. We then prove that the $1$-skeleton of every flag-no-square simplicial complex is $3$-clean and $K_4$-free. These ingredients connect the topological construction in Theorem~\ref{thm:TT} with our graph-theoretic reduction and yield the desired hardness result for $K_{3,3}$. Finally, we prove a balanced lifting lemma, which extends the result from $K_{3,3}$ to $K_{r,r}$ for every $r\ge 4$.

Now we introduce the definitions of the auxiliary bipartite graph $\dB(G)$ and the $3$-clean property.

\begin{dfn}\label{def:Bgraph}
For a graph $G$, let $V_L$ and $V_R$ be two disjoint copies of $V(G)$. For each vertex $v\in V(G)$, let $v_L\in V_L$ and $v_R\in V_R$ be its corresponding copies respectively. We define $\dB(G)$ to be the bipartite graph with parts $V_L$ and $V_R$ and edge set $E(\dB(G))=\{u_Lv_R: u=v \text{ or } uv\in E(G)\}.$
Equivalently, $\dB(G)$ contains the diagonal edge $v_Lv_R$ for every $v\in V(G)$, and the two cross-edges $u_Lv_R$ and $v_Lu_R$ for every edge $uv\in E(G)$.
\end{dfn}

\begin{dfn}[$3$-clean]\label{def:3clean}
For a graph $G$, let $G^\circ$ be the graph such that $V(G^\circ)=V(G)$, $xy\in E(G^\circ)$ if and only if either $x=y$ or $xy\in E(G)$.
A graph $G$ is \emph{$3$-clean} if, for every $X,Y\in \binom{V(G)}{3}$, $X\times Y\subseteq E(G^\circ)$ implies that $X=Y$ and $G[X]\cong K_3$.
\end{dfn}

We have the following lemma.

\begin{lem}\label{lem:clean-lifting}
Let $G$ be a $3$-clean graph, and set $n=|V(G)|$ and $m=|E(G)|$.
Then
$$
    \wsat(\dB(G),K_{3,3})=n+m+\wsat(G,K_3).
$$
\end{lem}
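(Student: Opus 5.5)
The plan is to pass to the peeling viewpoint of Lemma~\ref{lem:reverse-deletion}. Since $|E(\dB(G))|=n+2m$ and $|E(G)|=m$, the claimed identity is equivalent to
$$
\lambda_{K_{3,3}}(\dB(G))=\lambda_{K_3}(G)=m-\wsat(G,K_3).
$$
The first step is a structural description of the copies of $K_{3,3}$ in $\dB(G)$, which is where $3$-cleanness enters. A copy of $K_{3,3}$ in $\dB(G)$ has one side $X_L\subseteq V_L$ and the other side $Y_R\subseteq V_R$ with $X\times Y\subseteq E(G^\circ)$. By Definition~\ref{def:3clean}, $X=Y=T$ for some triangle $T$ of $G$. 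Hence the copies of $K_{3,3}$ in $\dB(G)$ are exactly the graphs $\dB(G)[T_L\cup T_R]$ for triangles $T$ of $G$. Each such copy consists of the three diagonal edges of $T$ together with both cross-edges of each of the three edges of $T$.

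\emph{Lower bound $\lambda_{K_{3,3}}(\dB(G))\ge\lambda_{K_3}(G)$.} Take a $K_3$-edge peeling sequence of $G$. Whenever $uv$ is deleted using a triangle $uvw$, we delete the single cross-edge $u_Lv_R$ in $\dB(G)$. We maintain the invariant that an edge of the current $G$ is still present exactly when both of its cross-edges are present, and that no diagonal edge is ever deleted. Under this invariant, the triangle $uvw$ of the current graph gives a full copy of $K_{3,3}$ on $\{u,v,w\}_L\cup\{u,v,w\}_R$ containing $u_Lv_R$. So the sequence is a valid $K_{3,3}$-edge peeling sequence of the same length.

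\emph{Upper bound $\lambda_{K_{3,3}}(\dB(G))\le\lambda_{K_3}(G)$.} Take any $K_{3,3}$-edge peeling sequence of $\dB(G)$. I would first normalize it so that it deletes no diagonal edges. Suppose the first diagonal deletion is $v_Lv_R$, performed via a full copy on a triangle $T=\{v,x,y\}$. Replace this step by the deletion of the cross-edge $v_Lx_R$, which is also in that full copy. By the structural description, every later step of the original sequence uses a copy of $K_{3,3}$ avoiding $v_Lv_R$, hence a triangle not containing $v$. Such a copy contains no edge incident to $v_L$ or $v_R$, and in particular it neither uses nor deletes $v_Lx_R$. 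Therefore the modified sequence is still valid (all later copies are unaffected) and has the same length, with one fewer diagonal deletion. Iterating this, we may assume that only cross-edges are deleted.

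Now at most one cross-edge per edge of $G$ is deleted: deleting $u_Lv_R$ requires a full copy containing $u$ and $v$, and this copy contains $v_Lu_R$. Define $G_t$ to consist of the edges of $G$ both of whose cross-edges are still present after step $t$. Each step then deletes exactly one edge $uv$ of $G_t$, and the full copy used at that step gives a triangle $T\ni u,v$ of $G_{t-1}$. This yields a $K_3$-edge peeling sequence of $G$ of the same length, so the upper bound follows, and with it the identity.

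I expect the main obstacle to be the normalization eliminating diagonal deletions. The key check is that deleting $v_Lv_R$ forbids every later copy of $K_{3,3}$ from touching $v_L$ or $v_R$ at all, which is exactly what the structural description from $3$-cleanness provides.
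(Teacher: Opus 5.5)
Your proof is correct. Its skeleton is the paper's: you reduce to $\lambda_{K_{3,3}}(\dB(G))=\lambda_{K_3}(G)$ via Lemma~\ref{lem:reverse-deletion}, use $3$-cleanness to show that every $K_{3,3}$ in $\dB(G)$ is the full copy on $T_L\cup T_R$ for a triangle $T$, and get the lower bound by deleting one cross-edge per peeled edge.

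The difference is in the upper bound. The paper translates a $K_{3,3}$-peeling of $\dB(G)$ into a \emph{mixed} $K_3$-peeling of $G$. A cross-edge deletion $u_Lv_R$ becomes the deletion of $uv$, and a diagonal deletion $v_Lv_R$ becomes the deletion of the vertex $v$. The paper then invokes Observation~\ref{lem:mixed-peeling} ($\mu_{K_3}=\lambda_{K_3}$) to discard the vertex deletions. You instead eliminate diagonal deletions on the $\dB(G)$ side by an exchange: you replace $v_Lv_R$ by $v_Lx_R$, which is valid because no later copy touches $v_L$ or $v_R$. You then read off an ordinary $K_3$-edge peeling directly through the graphs $G_t$. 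This exchange is the same domination idea as in the proof of Observation~\ref{lem:mixed-peeling}, carried out before projecting rather than after.

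What your route buys is self-containedness and explicitness. You verify two facts: once $u_Lv_R$ is deleted, its twin $v_Lu_R$ can never be deleted; and after $v_Lv_R$ is deleted, no edge at $v_L$ or $v_R$ is ever deleted. These are exactly what the paper's projection needs to be a legal mixed peeling (no edge of $G$ deleted twice, no edge deleted at an already removed vertex). The paper covers them only by the sentence ``each step above is legal''. The paper's route, in turn, is modular: the same mixed-peeling observation is reused in the proof of Lemma~\ref{lem:sync-lifting}.
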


Before proving this lemma, we define the mixed peeling process. Recall that we defined $H$-edge peeling and $\lambda_H(F)$ in Definition~\ref{def:lambda}.

\begin{dfn}[Mixed peeling]\label{def:mixed-peeling}
Let $H$ be a graph with no isolated vertices, and let $F$ be a host graph. A \emph{mixed $H$-peeling} deletes, at each step, either one edge of a current copy of $H$, or one vertex of such a copy together with all currently incident edges.
Let $\mu_H(F)$ be the maximum length of a mixed $H$-peeling sequence.
\end{dfn}

It is easy to see that the mixed peeling is dominated by the edge peeling.

\begin{obs}\label{lem:mixed-peeling}
If $H$ has no isolated vertices, then for every finite graph $F$, we have
$$
    \mu_H(F)= \lambda_H(F).
$$
\end{obs}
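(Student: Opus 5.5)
We prove the two inequalities $\mu_H(F)\ge\lambda_H(F)$ and $\mu_H(F)\le\lambda_H(F)$ separately.

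The first is immediate. Every $H$-edge peeling sequence is in particular a mixed $H$-peeling sequence in which only edge-deletion steps are used. Hence $\mu_H(F)\ge \lambda_H(F)$.

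For the reverse inequality, we take a mixed $H$-peeling sequence of maximum length $\mu_H(F)$ and convert it into an $H$-edge peeling sequence of at least the same length. We process the steps in order and maintain the invariant that after handling the first $i$ steps, the current graph of the edge peeling is the current graph of the mixed peeling together with the deleted vertices, which are now isolated. Edges incident to deleted vertices are gone, and everything else agrees.

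Edge-deletion steps are copied verbatim. A copy of $H$ in the mixed-process graph uses no deleted vertices, so it is also present in the edge-process graph, and the step remains valid.

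A vertex-deletion step deletes a vertex $v$ of a current copy $H'$ of $H$. Since $H$ has no isolated vertices, $v$ has at least one incident edge $e$ in $H'$. We replace the step by deleting the edges incident to $v$ one at a time, starting with edges of $H'$.

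The key point, and the only real obstacle, is whether every such single-edge deletion is legitimate, that is, whether each lies in a current copy of $H$. The first edge $e$ certainly does, since $H'$ is intact. Later incident edges need not lie in any copy of $H$. The resolution is that we do not need them all: we delete only $e$ and declare $v$ \emph{frozen}. Its remaining incident edges are left in place, and we simply never use them in subsequently chosen copies.

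To make this consistent, we weaken the invariant. The edge-process graph contains the mixed-process graph as the subgraph induced on the non-deleted vertices, together with extra edges at frozen vertices. Under this invariant, every later copy of $H$ used by the mixed process lies among non-deleted vertices. That copy, and each of its edges, is therefore still present in the edge-process graph. Consequently every subsequent edge step, and the single edge $e$ chosen at each vertex step, remains valid.

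Each mixed step therefore yields at least one valid edge deletion, and distinct steps delete distinct edges. This gives $\lambda_H(F)\ge\mu_H(F)$, and combined with the first inequality, $\mu_H(F)=\lambda_H(F)$.
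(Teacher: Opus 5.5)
Your proof is correct and is essentially the paper's argument. The paper replaces a vertex deletion by deleting a single edge $e$ of the current copy of $H$ incident to $v$, then uses induction on $|V(F)|+|E(F)|$ together with monotonicity of $\lambda_H$ under taking subgraphs (since $F-v\subseteq F-e$); your frozen-vertex invariant, that the mixed-process graph stays an induced subgraph of the edge-process graph, is that same induction unrolled step by step, with the monotonicity checked directly instead of invoked.
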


\begin{proof}
Since a $H$-edge peeling is a mixed $H$-peeling, $\mu_H(F)\ge \lambda_H(F)$. Now we need to prove that $\mu_H(F)\le \lambda_H(F)$.
We prove this by induction on $|V(F)|+|E(F)|$.
If $F$ is $H$-free, there is nothing to prove.

For the case when $F$ is not $H$-free, let us first take an optimal mixed $H$-peeling sequence and inspect its first step. If the first step deletes an edge $e$, then consider an $H$-edge peeling which deletes $e$.
The induction hypothesis applied to $F-e$ gives
$$
    \mu_H(F)=1+\mu_H(F-e)
    \le 1+\lambda_H(F-e)
    \le \lambda_H(F).
$$

If the first step deletes a vertex $v$, then $v$ lies in a current copy of $H$.
Since $H$ has no isolated vertices, $v$ is incident with some edge $e$ of $H$.
Consider an $H$-edge peeling which delete $e$.
The graph $F-v$ is a subgraph of $F-e$, so by induction and monotonicity of $\lambda_H$ in the host graph, we have $\mu_H(F-v)\le \lambda_H(F-v)\le \lambda_H(F-e).$
Thus
$$
    \mu_H(F)=1+\mu_H(F-v)
    \le 1+\lambda_H(F-e)
    \le \lambda_H(F).
$$
\end{proof}

\begin{proof}[Proof of Lemma~\ref{lem:clean-lifting}]
Let $G$ be a $3$-clean graph, and set $n=|V(G)|$ and $m=|E(G)|$. For every copy of $K_{3,3}$ in $\dB(G)$,
let $X,Y\in\binom{V(G)}{3}$ be its two parts, then $X\times Y\subseteq E(G^\circ).$
Since $G$ is $3$-clean, this implies $X=Y$ and $G[X]\cong K_3$.
Thus every $K_{3,3}$ in $\dB(G)$ comes from a unique triangle of $G$.

We next prove that $\lambda_{K_{3,3}}(\dB(G))=\lambda_{K_3}(G)$. Fix an arbitrary orientation of each edge of $G$; we refer to this orientation as the \emph{chosen orientation} of the edge.
The opposite oriented edge in $\dB(G)$, together with all diagonal edges, will be used as anchors and never deleted.

For the lower bound, whenever a $K_3$-edge peeling sequence in $G$ deletes an edge $uv$ contained in a current triangle $uvw$, delete in $\dB(G)$ the edge corresponding to the chosen orientation of $uv$.
The triangle $uvw$ gives a $K_{3,3}$ in $\dB(G)$, and all other eight edges of this $K_{3,3}$ are still present because the other edges in the triangle have not yet been deleted in $G$ and all anchors remain.
Hence $\lambda_{K_{3,3}}(\dB(G))\ge \lambda_{K_3}(G)$.

For the upper bound, take the optimal $K_{3,3}$-edge peeling sequence in $\dB(G)$, we construct a mixed $K_3$-peeling sequence $G$ as follows. Whenever the $K_{3,3}$-edge peeling deletes a non-diagonal edge $u_Lv_R$ with $u\neq v$, the mixed $K_3$-peeling deletes the edge $uv$ in $G$.
Whenever the $K_{3,3}$-edge peeling deletes a diagonal edge $v_Lv_R$, the mixed $K_3$-peeling deletes the vertex $v$ together with all edges currently incident to $v$. Since every copy of $K_{3,3}$ in $\dB(G)$ arises from a triangle in $G$, each step above is legal. Hence this construction gives a valid mixed $K_3$-peeling sequence in $G$.
Therefore $\lambda_{K_{3,3}}(\dB(G))\le \mu_{K_3}(G)= \lambda_{K_3}(G)$ by Lemma~\ref{lem:mixed-peeling}.

Finally, since $|E(\dB(G))|=n+2m$,
using Lemma~\ref{lem:reverse-deletion}, we have $\wsat(G,K_3)=m-\lambda_{K_3}(G)$ and 
\begin{align*}
    \wsat(\dB(G),K_{3,3})
    &=|E(\dB(G))|-\lambda_{K_{3,3}}(\dB(G))\\
    &=n+2m-\lambda_{K_3}(G)
    =n+m+\wsat(G,K_3).
\end{align*}
\end{proof}

\subsection{Flag-no-square and special subdivision}

Now we introduce the \emph{flag-no-square} property and the \emph{special subdivision}. 
\begin{dfn}[Flag-no-square]
A $2$-dimensional complex $L$ is \emph{no-square} if $L^{(1)}$ has no induced $4$-cycle.
It is \emph{flag-no-square} if it is both flag and no-square.
\end{dfn}

The special subdivision is from a construction due to Dranishnikov~\cite{Dra99}, we recall it there. Let
$\mathcal I$ be the boundary complex of the icosahedron, and let $\tau$ be
a fixed $2$-simplex of $\mathcal I$. Define $Z_{10}$ to be the
subcomplex of $\mathcal I$ spanned by all vertices whose distance from
$\tau$ in the $1$-skeleton $\mathcal I^{(1)}$ is at most one. Under this definition, the complex $Z_{10}$ is a triangulated $2$-dimensional disc consisting of ten
$2$-simplices. Among its vertices, exactly three are incident with precisely
two $2$-simplices of $Z_{10}$; these vertices lie on the boundary of
$Z_{10}$ and are called the \emph{true vertices}.

\begin{figure}[htbp]
\centering
\begin{tikzpicture}[scale=0.8, every node/.style={font=\small}]

\coordinate (A) at (0,0);
\coordinate (B) at (4,0);
\coordinate (C) at (2,3.464);

\coordinate (x) at (2,0);
\coordinate (y) at (3,1.732);
\coordinate (z) at (1,1.732);

\coordinate (p) at (0.95,0.75);
\coordinate (q) at (3.05,0.75);
\coordinate (r) at (2,2.15);

\draw[thick] (A)--(x)--(B)--(y)--(C)--(z)--cycle;

\draw (A)--(p);
\draw (x)--(p);
\draw (x)--(q);
\draw (B)--(q);
\draw (y)--(q);
\draw (y)--(r);
\draw (C)--(r);
\draw (z)--(r);
\draw (z)--(p);
\draw (p)--(q)--(r)--cycle;

\foreach \v in {A,B,C,x,y,z,p,q,r}
  \fill (\v) circle (1.6pt);

\node[below left] at (A) {$A$};
\node[below right] at (B) {$B$};
\node[above] at (C) {$C$};

\end{tikzpicture}
\caption{The picture of $Z_{10}$, where $A,B,C$ are the true vertices.}
\label{fig:Z10}
\end{figure}

The following definition of the special subdivision is the Definition~2.2 in~\cite{PS09}.
\begin{dfn}
The \emph{special subdivision} of a $2$-simplex $\Delta$ is the subdivision isomorphic to $Z_{10}$, where vertices of $\Delta$ correspond to true vertices of $Z_{10}$. We denote it by $\Delta^*$. For any $2$-dimensional simplicial complex $Y$, the \emph{special subdivision} $Y^*$ is obtained by taking the first barycentric subdivision of the $1$-skeleton of $Y$, followed by the special subdivision of every $2$-simplex of $Y$.
\end{dfn}

Przytycki and {\'S}wi{\k a}tkowski~\cite{PS09} also proved the following lemma, which appears as the Lemma~2.3 in their paper.
\begin{lem}[Przytycki and {\'S}wi{\k a}tkowski~\cite{PS09}]\label{lem:special-fns}
    Let $Y$ be a $2$-dimensional simplicial complex. Then its special subdivision $Y^*$ satisfies the flag-no-square property.
\end{lem}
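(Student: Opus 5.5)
We must show that $Y^*$ is flag and that $(Y^*)^{(1)}$ has no induced $4$-cycle. Since $Y^*$ is built from copies of $Z_{10}$ glued along subdivided edges, we split the argument into a local part (inside one special subdivided triangle $\Delta^*$) and a global part (how different copies interact). The vertices of $Y^*$ come in three types: original vertices of $Y$, barycenters of edges of $Y$, and the three interior vertices $p,q,r$ of some $\Delta^*$ (Figure~\ref{fig:Z10}).

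\emph{Local step.} We first check directly that $Z_{10}$, as a subcomplex of the icosahedron, is flag-no-square. Every triangle of $\mathcal I^{(1)}$ bounds a face of $\mathcal I$, so $\mathcal I$ is flag, and $\mathcal I^{(1)}$ has no induced $4$-cycle, since the links are $5$-cycles and two vertices at distance two have exactly two common neighbours, which are adjacent. Then $Z_{10}$ is a full subcomplex of $\mathcal I$ (spanned by its vertices), except that we should check no triangle of $\mathcal I$ with all vertices in $Z_{10}$ is missing. This is a finite inspection of Figure~\ref{fig:Z10}, where the outer hexagon $A,x,B,y,C,z$ has chords only through $p,q,r$. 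Induced subgraphs inherit the absence of induced $C_4$, so $\Delta^*$ is flag-no-square. We also record the key geometric fact: the boundary hexagon of $\Delta^*$ is an induced cycle in $(\Delta^*)^{(1)}$, and any two boundary vertices at distance $2$ along the hexagon have exactly one common neighbour in the interior (e.g.\ $A,x$ share $p$; $A,B$ share only $x$).

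\emph{Global step.} We then show that edges of $Y^*$ exist only inside a single $\Delta^*$ or along subdivided edges of $Y$. Distinct original vertices are never adjacent; they are at distance two via edge barycenters. Interior vertices of different triangles are never adjacent. For flagness, a triangle in $(Y^*)^{(1)}$ cannot use two interior vertices from different $\Delta^*$'s, and cannot use only boundary vertices, because the subdivided $1$-skeleton is bipartite (original versus barycenter). Hence it contains an interior vertex of some $\Delta^*$, so all its vertices lie in that $\Delta^*$, and local flagness applies.

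For no-square, take an induced $4$-cycle $C$. If $C$ contains an interior vertex of $\Delta^*$, its two neighbours on $C$ lie in $\Delta^*$. The main obstacle is ruling out a $4$-cycle that leaves $\Delta^*$ through its boundary. Such a cycle goes $v$ (interior), then boundary vertices $b_1,b_2$ of $\Delta^*$, then a fourth vertex $w$ adjacent to both $b_1,b_2$ but outside $\Delta^*$. Then $b_1,b_2$ are at hexagon-distance $\le 2$, since interior vertices see only consecutive boundary arcs, which we check from the figure. Any common neighbour $w$ outside $\Delta^*$ would be an interior vertex of another triangle $\Delta'^*$ containing both $b_1,b_2$. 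But two triangles of $Y$ share at most an edge, and within $\Delta'^*$ the same pair has a unique common interior neighbour. So $b_1,b_2$ are an original vertex and an adjacent barycenter, i.e.\ adjacent, making $C$ non-induced. If $C$ has no interior vertex, it lies in the bipartite subdivided $1$-skeleton of $Y$, where a $4$-cycle would need two original vertices joined through two barycenters, i.e.\ two distinct edges of $Y$ with the same endpoints, which is impossible in a simplicial complex. This case analysis completes the proof.
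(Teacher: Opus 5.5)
The paper does not prove this lemma; it quotes it as Lemma~2.3 of Przytycki and {\'S}wi{\k a}tkowski~\cite{PS09}. Your argument is therefore a self-contained replacement for a citation, and it is essentially sound.

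The local/global split is the natural one. $\mathcal I$ is flag with no induced $4$-cycle, and $Z_{10}$ is a full subcomplex of it. Each $(\Delta^*)^{(1)}$ is an induced subgraph of $(Y^*)^{(1)}$, because the hexagon is chordless and original vertices, resp.\ barycenters, are pairwise nonadjacent. The graph on original vertices and barycenters is the bipartite subdivided $1$-skeleton. Every neighbour of an interior vertex of $\Delta^*$ lies in $\Delta^*$. This gives an elementary proof that uses only the icosahedron and the fact that two $2$-simplices of $Y$ share at most an edge; the citation buys only brevity.

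Several steps are misstated and need repair.

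\emph{The ``key geometric fact'' is false as written.} Of the hexagon pairs at distance $2$, the barycenter pairs $\{x,y\},\{y,z\},\{z,x\}$ each have exactly one common interior neighbour ($q$, $r$, $p$). The true-vertex pairs $\{A,B\},\{B,C\},\{C,A\}$ have none, since each interior vertex is adjacent to exactly one true vertex. Your own examples do not fit the claim: $A,x$ are at distance $1$, and ``$A,B$ share only $x$'' exhibits a boundary vertex, not an interior one. The fact you actually need is this: a \emph{nonadjacent} pair of boundary vertices with a common interior neighbour in $\Delta^*$ is a pair $m_{e_1},m_{e_2}$ of barycenters of distinct edges $e_1,e_2$ of $\Delta$.

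\emph{The global no-square step needs the corrected fact.} You assert that a common neighbour $w\notin\Delta^*$ of $b_1,b_2$ must be interior to another $\Delta'^*$, but this skips the case that $w$ is an original vertex or a barycenter. Your appeal to a ``unique common interior neighbour in $\Delta'^*$'' is also not what makes the argument work. The corrected fact settles both points at once. The common neighbours of $m_{e_1}$ and $m_{e_2}$ in $Y^*$ are the common endpoint of $e_1,e_2$ and interior vertices of some $\Delta'^*$ with $e_1\cup e_2\subseteq\Delta'$. The latter forces $\Delta'=\Delta$, so $w\in\Delta^*$ and local no-square applies.

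\emph{Flagness also needs $K_4$-freeness.} For a $2$-dimensional complex, flagness requires $(Y^*)^{(1)}$ to be $K_4$-free, not only that every triangle spans a face. Your argument already gives this: any clique of size at least $3$ lies in a single $\Delta^*$, and $\mathcal I^{(1)}$ is $K_4$-free because its vertex links are induced $5$-cycles. It should be stated explicitly.
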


Next, we prove a lemma to relate the flag-no-square property to our $3$-clean property.
\begin{lem}\label{lem:fns-clean}
If $L$ is a $2$-dimensional flag-no-square complex and $G=L^{(1)}$, then $G$ is $3$-clean and $K_4$-free.
\end{lem}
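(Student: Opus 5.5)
The $K_4$-freeness is immediate from Observation~\ref{obs:flag-k4free}, since $L$ is a $2$-dimensional flag complex. The real work is the $3$-clean property. Fix $X,Y\in\binom{V(G)}{3}$ with $X\times Y\subseteq E(G^\circ)$. In other words, every $x\in X$ and every $y\in Y$ satisfy $x=y$ or $xy\in E(G)$. We must show that $X=Y$ and that $G[X]\cong K_3$. The plan is a case analysis on $s=|X\cap Y|\in\{0,1,2,3\}$. In each case we either reach a contradiction or the desired conclusion, using two tools only. The first is $K_4$-freeness. The second is the absence of induced $4$-cycles.

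First, the elements of $X\cap Y$ are adjacent to every vertex of $X\cup Y$ other than themselves. In particular $X\cap Y$ is a clique.

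\textbf{Case $s=3$.} Here $X=Y$ and every pair in $X$ is adjacent, so $G[X]\cong K_3$. This is the desired conclusion.

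\textbf{Case $s=2$.} Write $X=\{a,b,x\}$ and $Y=\{a,b,y\}$ with $x\neq y$. Then $a,b,x,y$ satisfy $ab,ax,ay,bx,by,xy\in E(G)$; note $xy\in E(G)$ because $x\in X$ and $y\in Y$ with $x\ne y$. So they form a $K_4$, which is a contradiction.

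\textbf{Case $s=1$.} Write $X=\{c,x_1,x_2\}$ and $Y=\{c,y_1,y_2\}$. All cross edges $x_iy_j$ are present, so $x_1y_1x_2y_2$ is a $4$-cycle. Since there is no induced $4$-cycle, one of the chords $x_1x_2$ or $y_1y_2$ is an edge. Say $x_1x_2\in E(G)$. Then $x_1,x_2,y_1$ are pairwise adjacent, and together with $c$, which is adjacent to all of them, they form a $K_4$. This is a contradiction.

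\textbf{Case $s=0$.} $X$ and $Y$ are disjoint, and $G$ contains the complete bipartite graph between them. Take $x_1,x_2\in X$ and $y_1,y_2\in Y$. The $4$-cycle $x_1y_1x_2y_2$ forces $x_1x_2\in E(G)$ or $y_1y_2\in E(G)$. Applying this to all choices of pairs, we first argue that $X$ or $Y$ is a clique. Suppose some pair in $X$, say $x_1x_2$, is a non-edge. Then every pair $y_iy_j$ in $Y$ must be an edge, so $Y$ is a clique. Now $Y\cup\{x_1\}$ is a $K_4$, which is a contradiction. By symmetry the same argument applies if $Y$ has a non-edge. Hence both $X$ and $Y$ are cliques, and then $X\cup\{y_1\}$ is a $K_4$, again a contradiction.

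The main obstacle is simply organizing the no-square argument correctly in cases $s=0$ and $s=1$. The key observation is that any two vertices from $X\setminus Y$ and any two from $Y\setminus X$ span a $4$-cycle, whose chord then combines with a common neighbour to create a $K_4$. I expect no further difficulty, since only flagness, through $K_4$-freeness, and the no-square condition are needed.
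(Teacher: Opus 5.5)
Your proof is correct and takes essentially the same route as the paper: a case analysis on $|X\cap Y|$ that uses only $K_4$-freeness and the absence of induced $4$-cycles. The one cosmetic difference is that the paper first proves, as a separate claim, that two nonadjacent vertices have at most two common neighbours, whereas you apply the chord argument directly to the $4$-cycles $x_1y_1x_2y_2$.
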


\begin{proof}
The graph $G$ is $K_4$-free by Corollary~\ref{obs:flag-k4free}, and it has no induced $C_4$ by the no-square property. We first claim that if $a$ and $b$ are nonadjacent vertices of $G$, then they have at most two common neighbours.
Otherwise, let $u,v,w$ be three common neighbours of $a$ and $b$.
If two of $u,v,w$, say $u$ and $v$, are nonadjacent, then $a-u-b-v-a$ is an induced $C_4$.
If $u,v,w$ are pairwise adjacent, then $\{a,u,v,w\}$ spans a $K_4$.
Both cases lead to a contradiction.

Now suppose $X,Y\in\binom{V(G)}{3}$ and $X\times Y\subseteq E(G^\circ)$.
If $|X\cap Y|=0$, then no two vertices of $X$ can be nonadjacent, since they would have the three vertices of $Y$ as common neighbours.
Thus $G[X]$ is a $K_3$, then any vertex of $Y$ together with $X$ gives a $K_4$ in $G$, a contradiction.

If $|X\cap Y|=1$, write $X=\{z,a,b\}$ and $Y=\{z,c,d\}$.
The vertices $a$ and $b$ must be adjacent, since otherwise $z,c,d$ would be three common neighbours of $a$ and $b$, leads to a contradiction. Thus $\{z, a, b, c\}$ spans a $K_4$ in $G$, a contradiction.

If $|X\cap Y|=2$, write $X=\{p,q,a\}$ and $Y=\{p,q,b\}$.
The condition $X\times Y\subseteq E(G^\circ)$ gives $pq,pa,pb,qa,qb,ab\in E(G),$ so $\{p,q,a,b\}$ spans a $K_4$ in $G$, which is a contradiction.

Therefore $X=Y$.
Then $X\times X\subseteq E(G^\circ)$ forces the three vertices of $X$ to be pairwise adjacent, so $G[X]\cong K_3$. Thus $G$ is $3$-clean and $K_4$-free.
\end{proof}

\subsection{The reduction in the \texorpdfstring{$K_{3,3}$}{} case}

Recall that we aim to perform the special subdivision on the simplicial complex $L_\varphi$ in Theorem~\ref{thm:TT}. The next two lemmas tell us that the weak saturation number is related to the collapsibility of some simplicial complex and the special subdivision keeps the collapsibility.

\begin{lem}\label{lem:flag-collapse-wsat}
Let $L$ be a finite connected $2$-dimensional flag complex, let $G=L^{(1)}$, and set $n=|V(L)|$.
Then
$$
    \wsat(G,K_3)=n-1
$$
if and only if $L$ becomes collapsible after deleting some $2$-dimensional faces.
\end{lem}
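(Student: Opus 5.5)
Via Lemma~\ref{lem:reverse-deletion}, $\wsat(G,K_3)=n-1$ is the same as $\lambda_{K_3}(G)=|E(G)|-n+1$. Because $L$ is flag, the triangles of $G$ are exactly the $2$-faces of $L$, so a $K_3$-edge peeling of $G$ is a sequence of steps, each removing an edge that currently lies in some triangle of $G$ all of whose edges are still present. The plan is to translate such peelings into collapses of $L$ with some triangles deleted, and back. Throughout, for a set $E'$ of remaining edges, let $L_{E'}$ be the complex consisting of all vertices, the edges in $E'$, and those triangles of $L$ whose three edges lie in $E'$.

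\emph{($\Leftarrow$)} Suppose that after deleting a set $T$ of $2$-faces, $L\setminus T\searrow$ a point. Decompose the collapse into elementary collapses. Any collapse that removes a vertex together with a free edge can be postponed to the very end, since it does not create or destroy any free edge involved in a triangle collapse. After this reordering, the sequence begins with $|L^{(2)}\setminus T|$ collapses of the form (edge $e$, triangle $\sigma$), leaving a $1$-dimensional complex. That complex still collapses to a point, so it is a spanning tree of $G$ with $n-1$ edges. Each step (edge $e$, triangle $\sigma$) is a legal $K_3$-peeling move in $G$, because all three edges of $\sigma$ are present at that moment. The deleted triangles $T$ impose no obstruction, since peeling only requires that the edges be present. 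Hence we peel every edge outside the spanning tree, giving $\lambda_{K_3}(G)\ge |E(G)|-n+1$ and so $\wsat(G,K_3)\le n-1$. Lemma~\ref{lem:connected-lower} supplies the reverse inequality.

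\emph{($\Rightarrow$)} Take a peeling $e_1,\dots,e_M$ with $M=|E(G)|-n+1$. The remaining graph is connected by Lemma~\ref{lem:connected-lower} (a weakly saturated subgraph is connected) and has $n-1$ edges, so it is a spanning tree. For each $i$ choose a witnessing triangle $\sigma_i\ni e_i$ whose edges are all present just before step $i$. The $\sigma_i$ are distinct, because after step $i$ the triangle $\sigma_i$ has lost an edge and can never witness again. Let $T$ be the set of $2$-faces not among the $\sigma_i$, and delete them from $L$. The resulting complex has exactly the triangles $\sigma_1,\dots,\sigma_M$. Before step $i$ it has been reduced to $L_{E_{i}}$ minus $T$, where $E_i$ denotes the edges still present before step $i$, and its triangles are exactly the $\sigma_j$ with $j\ge i$. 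We must check that $e_i$ is free there. Any $\sigma_j$ with $j>i$ containing $e_i$ would lose $e_i$ at step $i$ and could not witness step $j$, so $\sigma_i$ is the unique triangle containing $e_i$. Therefore $(e_i,\sigma_i)$ is an elementary collapse. After all $M$ steps we are left with a spanning tree, which collapses to a vertex. This shows $L\setminus T$ is collapsible.

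The delicate points are the bookkeeping in ($\Rightarrow$), namely that each $e_i$ is free at its turn, and the reordering of vertex–edge collapses in ($\Leftarrow$). Flagness is used precisely so that triangles of $G$ coincide with $2$-faces of $L$, and connectivity so that the final tree has $n-1$ edges.
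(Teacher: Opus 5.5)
Your proposal is correct and follows essentially the paper's proof. In ($\Rightarrow$) you delete the unused triangles and collapse $(e_i,\sigma_i)$ in peeling order, which is the paper's reverse addition order. In ($\Leftarrow$) you postpone the vertex--edge collapses so that the triangle collapses read off a $K_3$-peeling down to a spanning tree. The only cosmetic difference is that you identify the residual graph as a tree directly from its collapsibility, while the paper uses the Euler characteristic.
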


\begin{proof}
Suppose first that $\wsat(G,K_3)=n-1$.
Let $T\subseteq G$ be a weakly $K_3$-saturated spanning subgraph with $n-1$ edges.
By Lemma~\ref{lem:connected-lower}, $T$ is connected, and hence it is a spanning tree.

Let $e_1,\ldots,e_m$ be a weak saturation order for the edges of $G$ outside $T$.
When $e_i$ is added, it creates a new triangle.
Since $L$ is a flag, this new triangle containing $e_i$ in $G$ derives a $2$-dimensional face of $L$, denote it by $\theta_i$.
The faces $\theta_i$ are pairwise distinct.

Let $L'$ be the subcomplex obtained from $L$ by deleting all $2$-dimensional faces not among $\theta_1,\ldots,\theta_m$.
We collapse $L'$ by removing $\{e_i,\theta_i\}$ with order $\{e_m,\theta_m\},\{e_{m-1},\theta_{m-1}\},\ldots,\{e_1,\theta_1\}.$
At the moment when $\{e_i,\theta_i\}$ is considered, no remaining $2$-dimensional face other than $\theta_i$ contains $e_i$.
Indeed, For $j<i$, the face $\theta_j$ does not contain $e_i$, since $e_i$ had not yet appeared at step $j$ in the weak saturation order; for $j>i$, the face $\theta_j$ has already been removed in the collapse sequence.
Thus $e_i$ is a free face of $\theta_i$.
After these collapses, only the tree $T$ remains, and it is easy to see that $T$ is collapsible.
Hence $L'$ is collapsible.

Conversely, suppose that a subcomplex $L'$ obtained from $L$ by deleting some $2$-dimensional faces is collapsible.
Choosing an elementary collapse sequence from $L'$ to a single vertex.
Without loss of generality, We may assume that all elementary collapses in which the facet $\sigma$ has dimension $2$ are performed before those in which $\sigma$ has dimension $1$.
Indeed, an elementary collapse whose facet is an edge removes only a maximal edge and one of its free endpoints, and hence removes no simplex contained in a remaining $2$-dimensional face. Thus postponing such a collapse does not change the set of remaining $2$-dimensional faces, nor which edges are contained in them. Therefore any subsequent elementary collapse whose facet is $2$-dimensional is still valid if performed first. The postponed edge collapse also remains valid afterwards, since deleting simplices cannot make its free endpoint belong to an additional facet. Repeating this interchange gives the desired ordering.
After all $2$-dimensional faces have been removed, the remaining $1$-dimensional complex forms a connected graph $T$ on all vertices.
Since $L'$ is collapsible, by Fact~\ref{lem:collapse-euler}, we have $\chi(T)=\chi(L')=1$. Hence $T$ is a tree.

The reverse order of the elementary collapses whose facet is $2$-dimensional gives an edge-addition order from $T$ to $G$ in which each added edge creates a triangle.
Thus $T$ is weakly $K_3$-saturated in $G$, so $\wsat(G,K_3)\le n-1$. By Lemma~\ref{lem:connected-lower}, we have $\wsat(G,K_3)\ge n-1$, and therefore we have $\wsat(G,K_3)= n-1$.
\end{proof}

\begin{lem}\label{lem:subdivision-reflection}
Let $L$ be a finite connected $2$-dimensional simplicial complex, and let $L'$ be a subdivision of $L$.
Then $L$ becomes collapsible after removing some $2$-dimensional faces if and only if $L'$ becomes collapsible after removing some $2$-dimensional faces.
\end{lem}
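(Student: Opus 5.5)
The plan is to translate ``collapsible after removing some $2$-dimensional faces'' into a statement that is visibly invariant under subdivision. Fact~\ref{lem:subdivision-collapsible} then does the work, with Fact~\ref{lem:disk-endo-collapsible} used to handle the $2$-faces that are subdivided into disks. Throughout, write $L'$ for the subdivision. Every $2$-simplex $\sigma$ of $L$ has underlying polyhedron $|\sigma|$, and this is triangulated in $L'$ by a disk $D_\sigma$. Likewise every edge of $L$ becomes a subdivided path.

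\textbf{($\Rightarrow$)} Suppose $M\subseteq L$ is obtained by deleting a set $S$ of $2$-faces and $M$ is collapsible. Let $M'\subseteq L'$ be the subcomplex obtained by deleting all $2$-faces of $L'$ lying in $\bigcup_{\sigma\in S}|\sigma|$. Then $M'$ is a subdivision of $M$, since $|M'|=|M|$: the $1$-skeleton of $L$ is kept, as is every $D_\sigma$ with $\sigma\notin S$. By Fact~\ref{lem:subdivision-collapsible}, $M'$ is collapsible, and $M'$ arises from $L'$ by deleting $2$-faces, as required.

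\textbf{($\Leftarrow$)} This direction is the main obstacle. Here $M'\subseteq L'$ is collapsible after deleting a set $S'$ of $2$-faces of $L'$, and $S'$ need not be a union of whole disks $D_\sigma$. I would proceed in three steps.

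\emph{Step 1.} For each $2$-simplex $\sigma$ of $L$ with $S'\cap D_\sigma\neq\emptyset$, the part $M'\cap D_\sigma$ is a disk with at least one triangle removed. I would show that, after possibly deleting further triangles, $M'$ collapses onto the complex obtained by replacing each such $M'\cap D_\sigma$ by $\partial D_\sigma$. The tool is Fact~\ref{lem:disk-endo-collapsible}: $D_\sigma$ minus one triangle collapses to $\partial D_\sigma$. To use it, I would first enlarge $S'$ inside $D_\sigma$ to exactly one triangle, i.e.\ put the other deleted triangles back. Putting triangles back could destroy collapsibility, so instead I expect to argue the reverse way. Deleting all of $D_\sigma$'s interior triangles leaves the $1$-skeleton of $D_\sigma$. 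I would then collapse away the interior edges and vertices of that $1$-skeleton, which is a graph piece attached along $\partial D_\sigma$. The aim is to reach a complex $N'$ that is a subdivision of $L$ minus those $\sigma$.

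\emph{Step 2.} The difficulty is that deleting more triangles changes the Euler characteristic and may create cycles. So the cleaner approach is homotopy-theoretic. Collapsibility of a $2$-complex obtained by deleting triangles is equivalent to the remaining complex being contractible and collapsible. The Euler characteristic count (Fact~\ref{lem:collapse-euler}) pins down the number of deleted triangles as $\widetilde\chi$, and each $D_\sigma$ can contain at most one deleted triangle without the complex acquiring a disconnection or extra $1$-cycle. I would choose $S'$ minimal, so that each $D_\sigma$ meets $S'$ in at most one triangle; minimality should be forced because an extra deleted triangle in a disk produces a nontrivial $1$-cycle contradicting contractibility. Then Fact~\ref{lem:disk-endo-collapsible} collapses each punctured $D_\sigma$ to $\partial D_\sigma$. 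The result $N'$ is a subdivision of $N=L\setminus\{\sigma: D_\sigma\cap S'\neq\emptyset\}$.

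\emph{Step 3.} It remains to check that $N'$ is collapsible. Collapsibility is not invariant under collapse in reverse, so I would use the fact that in dimension $\le2$, if $M'\searrow N'$ and $M'$ is collapsible, then $N'$ is collapsible. This follows because collapsibility of $2$-complexes is characterized by the nonexistence of stuck subcomplexes and free-face greedy collapsing; I would cite it alongside~\cite{ST23}. Given that, Fact~\ref{lem:subdivision-collapsible} shows that $N$ is collapsible, which proves ($\Leftarrow$). I expect Step 2 and this final greediness claim to be the delicate points.
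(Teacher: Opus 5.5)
\textbf{The gap is in your ($\Rightarrow$) direction.} It breaks at the claim $|M'|=|M|$. Deleting a $2$-face removes only that face and keeps its proper faces; this is how the lemma and the definition of endo-collapsibility use the operation. So after you delete every small triangle of $D_\sigma$ for $\sigma\in S$, the whole interior $1$-skeleton of $D_\sigma$ is still in $M'$, and those edges bound nothing. Quantitatively, let $D_\sigma$ have $k_\sigma$ triangles. Since $M$ is collapsible, $\chi(M)=\chi(L)-|S|=1$, so $\chi(M')=\chi(L)-\sum_{\sigma\in S}k_\sigma=1-\sum_{\sigma\in S}(k_\sigma-1)$. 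By Fact~\ref{lem:collapse-euler}, $M'$ is therefore not collapsible as soon as one of these disks has at least two triangles. That is exactly the situation for the special subdivision, where $k_\sigma=10$. Your Step~1 hits the same obstacle: the $1$-skeleton of an emptied disk contains cycles and cannot be collapsed away.

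The missing idea is to delete exactly \emph{one} triangle $\sigma_\Delta$ from each $D_\Delta$, $\Delta\in S$. Choose it via Fact~\ref{lem:disk-endo-collapsible} so that $D_\Delta\setminus\sigma_\Delta\searrow\partial D_\Delta$. These collapses only touch interior faces of $D_\Delta$, which lie in no other simplices, so $L'\setminus P\searrow (L\setminus S)'$. The latter is collapsible by Fact~\ref{lem:subdivision-collapsible}. This is the paper's argument for this direction; the endo-collapsibility fact belongs here.

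\textbf{Your ($\Leftarrow$) direction} (Steps~2--3) is a genuinely different route from the paper's and can be completed, but each step rests on a claim you only assert.

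(a) You cannot ``choose'' $S'$ so that each $D_\sigma$ meets it in at most one triangle: $S'$ is given, and $|S'|=\chi(L)-1$ is forced. The property needs a proof. For example, $H_1(L'\setminus S')=0$ and the exact sequence of the pair $(L',L'\setminus S')$ give, for each $t\in S'$, a $2$-cycle of $L'$ with coefficient $1$ on $t$ and $0$ on $S'\setminus\{t\}$. But every $2$-cycle of $L'$ has constant coefficient on each $D_\sigma$, because interior edges of $D_\sigma$ lie in exactly two triangles, both in $D_\sigma$, and the triangles of a disk are connected through interior edges. So two triangles of $S'$ in one disk are impossible.

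(b) Fact~\ref{lem:disk-endo-collapsible} only provides \emph{some} good triangle, while yours is prescribed by $S'$. You need that in dimension $2$ every triangle works. This is true: a proper nonempty set of triangles of a disk always has a free interior edge on its frontier. Once only a graph is left, it is connected with $\chi=0$ and contains the cycle $\partial D_\sigma$, so it is $\partial D_\sigma$ with trees attached at single vertices, and their other leaves are free.

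(c) The greedy property ``$M'$ collapsible and $M'\searrow N'$ imply $N'$ collapsible'' is true in dimension $2$, but it should be proved rather than gestured at. If a maximal collapse of $N'$ stops at a complex without free faces, there are two cases:
\begin{itemize}
\item The complex contains a $2$-subcomplex with no free edges. Then the first of its triangles could never be removed in any collapse of $M'$, a contradiction.
\item The complex is a leafless graph homotopy equivalent to $M'$, hence a point.
\end{itemize}

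The paper sidesteps all three claims. It simulates the given collapse of $L'\setminus S$ directly on $L\setminus R$, collapsing the intact faces of $L$ in the order in which their first small triangle disappears. The count $\chi(T)=1+s-r\ge 1$ then forces the resulting graph $T$ to be a tree, and as a by-product gives $r=s$, which is your claim~(a).
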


\begin{proof}
Suppose first that $L\setminus H$ is collapsible for some set $H\subseteq L^{(2)}$ of $2$-dimensional faces.
For each $\Delta\in H$, let $D_\Delta$ be the triangulated disc in $L'$ which is a subdivision of $\Delta$.
By Fact~\ref{lem:disk-endo-collapsible}, we can choose a small triangle $\sigma_\Delta\subseteq D_\Delta$ such that $(D_\Delta \setminus \sigma_\Delta)\searrow \partial D_\Delta.$
Let $P=\{\sigma_\Delta:\Delta\in H\}$.
In $L'-P$, collapse each $D_\Delta \setminus \sigma_\Delta$ to its boundary implies $(L' \setminus P)\searrow (L \setminus H)',$ where $(L \setminus H)'$ is the subdivision of $L \setminus H$ induced by $L'$.
By Fact~\ref{lem:subdivision-collapsible}, $(L \setminus H)'$ is collapsible, so $L' \setminus P$ is collapsible.

Conversely, suppose that $L' \setminus S$ is collapsible for some set $S$ of $2$-dimensional faces in $L'$.
Let $R$ be the set of $2$-dimensional faces in $L$ whose subdivision contains at least one $2$-dimensional face in $S$.
Call an $2$-dimensional face of $L$ \emph{intact} if it is not in $R$.
Fix an elementary collapse sequence of $L' \setminus S$.
For every intact $2$-dimensional face $\Delta$, let $\tau(\Delta)$ be the first time at which a small $2$-dimensional face contained in the subdivision of $\Delta$ is removed in this sequence. We order the intact $2$-dimensional faces by increasing $\tau$.

Consider an intact $2$-dimensional face $\Delta$ whose time $\tau(\Delta)$ is currently the smallest, and let $\delta$ be the first small $2$-dimensional face in the subdivision of $\Delta$ that will be removed by the collapse sequence.
At that moment $\delta$ has a free edge $f$.
Since no small $2$-dimensional face in the subdivision of $\Delta$ has been removed earlier, $f$ cannot be an interior edge of that subdivided disk. Thus $f$ lies on the subdivision of an edge $e\subseteq \partial\Delta$.

We now claim that $e$ is a free edge in the current complex obtained from $L \setminus R$. Indeed, if there is another intact $2$-dimensional face $\Delta_1$ contains $e$, then the minimality of $\tau(\Delta)$ implies that the small $2$-dimensional face in the subdivision of $\Delta_1$ that contains $f$ would still be present when $\delta$ is removed, contradicting that $f$ is free. We may therefore collapse $\Delta$ along the free edge $e$.

Repeating this for the intact $2$-dimensional faces, in the order given by $\tau$, collapses $L \setminus R$ to a $1$-dimensional connected graph $T$.
The connectivity is preserved because removing $2$-dimensional faces does not change the $1$-skeleton, and collapsing a $2$-dimensional face along a free edge does not disconnect the $1$-skeleton: the endpoints of the removed edge remain connected through the other two edges.

It remains to see that $T$ is a tree. Let $s=|S|$ and $r=|R|$.
Since $L' \setminus S$ is collapsible, we have $\chi(L' \setminus S)=1$. It is well known that subdivision preserves Euler characteristic, and removing one $2$-dimensional face decreases Euler characteristic by $1$, so $\chi(L)-s=1.$
Also, we have 
$$
    \chi(T)=\chi(L \setminus R)=\chi(L)-r=1+s-r.
$$
By definition, every $2$-dimensional face in $R$ contains at least one $2$-dimensional face in $S$, so $r\le s$ and hence $\chi(T)\ge 1$.
Since a connected graph has Euler characteristic at most $1$, we have $\chi(T)=1$ and $T$ is a tree.
Thus $T$ is collapsible, which implies $L \setminus R$ is collapsible.
\end{proof}

Now we are ready to prove that given a finite graph $F$ and an integer $k$ as input, it is NP-complete to decide whether $\wsat(F,K_{3,3})\le k$.

\begin{proof}[\bf Proof of Theorem~\ref{thm:intro-cliques} for case $K_{3,3}$]
    For any $3$-CNF formula $\varphi$, let $L_\varphi$ be the complex from Theorem~\ref{thm:TT}, and let $L_\varphi^*$ be the special subdivision of $L_\varphi$. By Lemma~\ref{lem:special-fns} and Lemma~\ref{lem:fns-clean}, we have $L_\varphi^*$ is flag-no-square and $(L_\varphi^*)^{(1)}$ is $3$-clean. We claim that the following statements are equivalent.
    \begin{enumerate}[(1)]
        \item $\varphi$ is satisfiable.
        \item $L_\varphi$ becomes collapsible after removing a set of 2-dimensional faces.
        \item $L_\varphi^*$ becomes collapsible after removing a set of 2-dimensional faces.
        \item $\wsat\left((L_\varphi^*)^{(1)},K_3\right)=|V\left( (L_\varphi^*)^{(1)}\right) |-1.$
    \end{enumerate}
    The equivalence between (1) and (2) is from Theorem~\ref{thm:TT}; Lemma~\ref{lem:subdivision-reflection} gives the equivalence between (2) and (3) and Lemma~\ref{lem:flag-collapse-wsat} gives the equivalence between (3) and (4).

    Set $n=|V\left( (L_\varphi^*)^{(1)}\right) |$ and $m=|E\left( (L_\varphi^*)^{(1)}\right) |$. By Lemma~\ref{lem:clean-lifting}, we have $\wsat\left(\dB\left((L_\varphi^*)^{(1)}\right),K_{3,3}\right) = n+m+\wsat\left((L_\varphi^*)^{(1)},K_3\right).$ Since $(L_\varphi^*)^{(1)}$ is connected, Lemma~\ref{lem:connected-lower} gives $\wsat\left((L_\varphi^*)^{(1)},K_3\right)\ge n-1$. Together with the analysis above, we have that $\varphi$ is satisfiable if and only if $\wsat\left((L_\varphi^*)^{(1)},K_3\right)\le n-1$, which is equivalent to $\wsat\left(\dB\left((L_\varphi^*)^{(1)}\right),K_{3,3}\right) \le 2n+m-1.$ 
    This gives a polynomial-time reduction from $3$-SAT to our decision problem. Together with Lemma~\ref{lem:np-membership}, the result follows.
\end{proof}

\subsection{Lifting to \texorpdfstring{$K_{r,r}$}{} with \texorpdfstring{$r \ge 4$}{}}

For the general case, given a $3$-CNF formula $\varphi$, we construct a bipartite graph from the graph $\dB\left((L_\varphi^*)^{(1)}\right)$ used in the proof of the $K_{3,3}$ case, and reduce $3$-SAT to the corresponding decision problem. More precisely, if $(V_L,V_R)$ is the bipartition of $\dB\left((L_\varphi^*)^{(1)}\right)$, we add $r-3$ new vertices to $V_L$ and $r-3$ new vertices to $V_R$, and then join every new vertex completely to the opposite part.

Before giving the reduction, we prove several auxiliary lemmas. Let $X$ be a bipartite graph with left side $A$ and right side $B$.
We define a \emph{lifting} on $X$ as follows: add a new vertex $u$ to $A$ and a new vertex $v$ to $B$, join $u$ to every vertex of $B$, join $v$ to every vertex of $A$, and add the edge $uv$. We denote the resulting bipartite graph by $X^+$. It is easy to see that for every integer $a>0$, if $X$ is $K_{a+1,a}$-free, then every copy of $K_{a+1,a+1}$ in $X^+$ contains both $u$ and $v$.

\begin{lem}\label{lem:sync-lifting}
For every integer $a>0$, if a bipartite graph $X$ with parts $A$ and $B$ is $K_{a+1,a}$-free, then $$\wsat(X^+,K_{a+1,a+1})=\wsat(X,K_{a,a})+|A|+|B|+1.$$
\end{lem}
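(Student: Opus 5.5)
The plan is to mirror the proof of Lemma~\ref{lem:clean-lifting}, working with peeling numbers rather than directly with saturation orders. Since $|E(X^+)|=|E(X)|+|A|+|B|+1$, Lemma~\ref{lem:reverse-deletion} shows that the statement is equivalent to $\lambda_{K_{a+1,a+1}}(X^+)=\lambda_{K_{a,a}}(X)$. I will prove the two inequalities separately, using the remark made just before the lemma: since $X$ is $K_{a+1,a}$-free, every copy of $K_{a+1,a+1}$ in $X^+$ contains both $u$ and $v$. Removing $u$ and $v$ from such a copy therefore leaves a copy of $K_{a,a}$ in $X$.

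\textbf{Lower bound $\lambda_{K_{a+1,a+1}}(X^+)\ge\lambda_{K_{a,a}}(X)$.} Take an optimal $K_{a,a}$-edge peeling sequence in $X$ and perform the same deletions in $X^+$. The cone edges $ub$ ($b\in B$), $va$ ($a\in A$) and $uv$ are never deleted. Whenever an edge $xy$ is deleted inside a current copy $Q$ of $K_{a,a}$, the set $Q\cup\{u,v\}$ spans a current copy of $K_{a+1,a+1}$ containing $xy$. Equivalently, in the saturation language, the initial graph consists of a minimum weakly $K_{a,a}$-saturated subgraph of $X$ together with all $|A|+|B|+1$ cone edges. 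This direction is routine.

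\textbf{Upper bound $\lambda_{K_{a+1,a+1}}(X^+)\le\mu_{K_{a,a}}(X)$.} Fix an optimal $K_{a+1,a+1}$-edge peeling sequence in $X^+$. From it I build a mixed $K_{a,a}$-peeling sequence in $X$ of the same length, step by step; Observation~\ref{lem:mixed-peeling} then gives $\mu_{K_{a,a}}(X)=\lambda_{K_{a,a}}(X)$. Call a vertex $b\in B$ \emph{dead} once $ub$ has been deleted, and call $a'\in A$ dead once $va'$ has been deleted. A dead vertex can never lie in a later copy, because every copy contains $u$ and $v$ and hence all its vertices must still be joined to the cone. The simulation keeps the invariant that the current graph of the mixed peeling equals the current $X^+$ restricted to the alive vertices of $X$. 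There are four cases:
\begin{enumerate}[(a)]
\item Deleting an edge $xy$ of $X$ is mapped to deleting $xy$. The witnessing copy minus $\{u,v\}$ is a current $K_{a,a}$ on alive vertices.
\item Deleting $ub$ is mapped to deleting the vertex $b$, which lies in the witnessing copy minus $\{u,v\}$.
\item Deleting $va'$ is handled symmetrically, by deleting the vertex $a'$.
\item Deleting $uv$ is mapped to deleting an arbitrary vertex of the witnessing $K_{a,a}$. This copy is nonempty since $a>0$, and $K_{a,a}$ has no isolated vertices.
\end{enumerate}

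The step I expect to need the most care is verifying that the invariant persists, so that each simulated step is legal in the mixed peeling. Three things must be checked. First, no edge of $X^+$ incident to a dead vertex is ever deleted, since it can no longer lie in any copy. Second, after case (d) no further deletions occur in $X^+$ at all, since every copy needs the edge $uv$; this is why the vertex deleted in case (d) never conflicts with a later step. Third, in cases (b) and (c) the vertex being deleted is still alive in the mixed process, which holds because its cone edge was present just before that step. Combining the two inequalities with Lemma~\ref{lem:reverse-deletion} yields the claimed formula.
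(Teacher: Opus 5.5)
Your proposal is correct and follows essentially the same route as the paper: you reduce via Lemma~\ref{lem:reverse-deletion} to $\lambda_{K_{a+1,a+1}}(X^+)=\lambda_{K_{a,a}}(X)$, get the lower bound by never deleting the cone edges, and get the upper bound by simulating a $K_{a+1,a+1}$-edge peeling of $X^+$ with a mixed $K_{a,a}$-peeling of $X$ (cone edges become vertex deletions, $uv$ is necessarily last) and then invoking Observation~\ref{lem:mixed-peeling}. The differences are cosmetic: you delete a vertex rather than an edge when $uv$ is removed, and you state the alive/dead invariant explicitly, whereas the paper only asserts that no edge or vertex is deleted twice.
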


\begin{proof}
By Lemma~\ref{lem:reverse-deletion}, it is enough to prove
$\lambda_{K_{a+1,a+1}}(X^+)=\lambda_{K_{a,a}}(X).$ Let $u$ and $v$ be the new vertices in $X^+$ such that the parts of $X^+$ is $A\cup \{u\}$ and $B\cup \{v\}$. Since $X$ is $K_{a+1,a}$-free, by the observation above, every copy of $K_{a+1,a+1}$ in $X^+$ contains both $u$ and $v$. So there is a bijection between the copies of $K_{a,a}$ in $X$ and the copies of $K_{a+1,a+1}$ in $X^+$. 

The lower bound is immediate. Given a $K_{a,a}$-edge peeling sequence in $X$, it naturally forms a $K_{a+1,a+1}$-edge peeling sequence in $X^+$.

To prove the upper bound, we take an arbitrary $K_{a+1,a+1}$-edge peeling sequence $\mathcal P$ in $X^+$ and show how to derive from it a mixed $K_{a,a}$-peeling sequence $\mathcal P'$ in $X$.
We define $\mathcal P'$ as follows. Whenever $\mathcal P$ deletes an edge $xy\in E(X)$, we delete the same edge $xy$ in $\mathcal P'$. Whenever $\mathcal P$ deletes an edge $u y$ with $y\in B$, we delete the vertex $y$ in $\mathcal P'$, together with all edges of $X$ currently incident to $y$. Similarly, whenever $\mathcal P$ deletes an edge $xv$ with $x\in A$, we delete the vertex $x$ in $\mathcal P'$, together with all edges of $X$ currently incident to $x$.

It remains to handle the edge $uv$. Suppose that $uv$ is deleted at some step of $\mathcal P$. Then this step must be the last step of the peeling sequence: any copy of $K_{a+1,a+1}$ involved in a later deletion would have to contain both $u$ and $v$, and hence would require the edge $uv$. For this final step, we let $\mathcal P'$ delete an arbitrary edge contained in the corresponding copy of $K_{a,a}$ in $X$.

The mixed $K_{a,a}$-peeling sequence $\mathcal P'$ is legal. Indeed, at each step of $\mathcal P'$, the deleted edge or vertex is contained in a copy of $K_{a,a}$ in the current graph, obtained from the copy of $K_{a+1,a+1}$ that contains the edge deleted at the corresponding step of $\mathcal P$ and is destroyed by that deletion. Moreover, no edge or vertex is deleted more than once in $\mathcal P'$.

Thus we have
$$
    \lambda_{K_{a+1,a+1}}(X^+)
    \le \mu_{K_{a,a}}(X)
    = \lambda_{K_{a,a}}(X),
$$
where the last equality is by Lemma~\ref{lem:mixed-peeling}.
And hence we have $\lambda_{K_{a+1,a+1}}(X^+)=\lambda_{K_{a,a}}(X).$ This completes the proof of Lemma~\ref{lem:sync-lifting}.
\end{proof}

For $3$-clean graphs, we also have the following lemma.
\begin{lem}\label{lem:B-no-43}
If $G$ is $3$-clean, then $\dB(G)$ is $K_{3,4}$-free.
\end{lem}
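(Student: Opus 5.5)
Suppose, for contradiction, that $\dB(G)$ contains a copy of $K_{3,4}$. Since $\dB(G)$ is bipartite with parts $V_L$ and $V_R$, the two sides of this copy lie in different parts. By the symmetry of $\dB(G)$ (the map $v_L\leftrightarrow v_R$ is an automorphism, because $u_Lv_R$ is an edge iff $v_Lu_R$ is), we may assume the $3$-side lies in $V_L$ and the $4$-side lies in $V_R$. Write the $3$-side as $\{x_L : x\in X\}$ with $X\in\binom{V(G)}{3}$, and the $4$-side as $\{y_R : y\in Y'\}$ with $|Y'|=4$. By Definition~\ref{def:Bgraph}, the copy being complete means $X\times Y'\subseteq E(G^\circ)$.

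The key step is to apply the $3$-clean property to every $3$-subset of $Y'$. For each $Y\in\binom{Y'}{3}$ we have $X\times Y\subseteq E(G^\circ)$, so Definition~\ref{def:3clean} forces $Y=X$. But $Y'$ has four distinct $3$-subsets, and they cannot all equal the single set $X$. This contradiction shows that $\dB(G)$ contains no $K_{3,4}$. The same argument with the roles of $L$ and $R$ swapped covers the other orientation, so the symmetry reduction is not really needed.

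I do not expect a genuine obstacle. The only points to check are, first, that a $K_{3,4}$ in a bipartite graph must place its sides in opposite parts, which holds because the parts are independent sets. Second, the copies of $V(G)$ inside each side must be distinct vertices, which is automatic since $v\mapsto v_L$ and $v\mapsto v_R$ are injective. It also follows that $\dB(G)$ is $K_{4,3}$-free, which is what Lemma~\ref{lem:sync-lifting} requires with $a=3$.
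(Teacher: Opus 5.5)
Your proof is correct and matches the paper's argument: identify the two sides of the $K_{3,4}$ with subsets $X$ and $Y'$ of $V(G)$, apply $3$-cleanness to each $3$-subset of the $4$-side to force it to equal $X$, and get a contradiction because a $4$-set has more than one $3$-subset. Your remark on the $L\leftrightarrow R$ symmetry, which also gives the $K_{4,3}$-freeness needed in Lemma~\ref{lem:sync-lifting}, is a correct and harmless refinement of the paper's ``without loss of generality''.
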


\begin{proof}
Suppose that $\dB(G)$ contains a copy of $K_{3,4}$. Let $X$ and $Y$ be its two parts. Without loss of generality, we assume that $|X|=3$ and $|Y|=4$. We identify $X$ and $Y$ with their corresponding subsets of $V(G)$. For every $Y'\subset Y$ with $|Y'|=3$, the definition of $\dB(G)$ implies that $X\times Y'\subseteq E(G^\circ)$. Since $G$ is $3$-clean, it follows that $X=Y'$. This would have to hold for every $3$-element subset $Y'$ of $Y$, which is impossible because $|Y|=4$. Hence $\dB(G)$ is $K_{3,4}$-free.
\end{proof}

Now we are ready to prove the $K_{r,r}$ case with $r\ge 4$.

\begin{proof}[\bf Proof of Theorem~\ref{thm:intro-cliques} for case $K_{r,r}$]
    For any $3$-CNF formula $\varphi$, let $L_\varphi$ be the complex from Theorem~\ref{thm:TT}, and let $L_\varphi^*$ be the special subdivision of $L_\varphi$. For $r\ge 4$, we define $\dB_{\varphi}^r$ to be the graph obtained from $\dB\left((L_\varphi^*)^{(1)}\right)$ by applying the lifting operation successively $r-3$ times. By Lemma~\ref{lem:special-fns} and Lemma~\ref{lem:fns-clean}, we have $(L_\varphi^*)^{(1)}$ is $3$-clean. Then Lemma~\ref{lem:B-no-43} implies that $\dB\left((L_\varphi^*)^{(1)}\right)$ is $K_{3,4}$-free. Hence, by the definition of the lifting operation, it is easy to see that $\dB_{\varphi}^r$ is $K_{r,r+1}$-free for every $r\ge 4$.

    As before, set $n=|V\left( (L_\varphi^*)^{(1)}\right) |$ and $m=|E\left( (L_\varphi^*)^{(1)}\right) |$.
    Therefore, by repeated application of Lemma~\ref{lem:sync-lifting}, we obtain
\begin{align*}
    \wsat\left(\dB_{\varphi}^r,K_{r,r}\right)
    &=
    \wsat\left(\dB\left((L_\varphi^*)^{(1)}\right),K_{3,3}\right)
    +
    \sum_{i=3}^{r-1}\bigl(2(n+i-3)+1\bigr)\\
    &=
    \wsat\left(\dB\left((L_\varphi^*)^{(1)}\right),K_{3,3}\right)+2(r-3)n+(r-3)^2.
\end{align*}
   By the proof of the $K_{3,3}$ case, we obtain that $\varphi$ is satisfiable if and only if $\wsat\left(\dB_{\varphi}^r,K_{r,r}\right) \le 2(r-2)n+m+(r-3)^2-1$. This gives a polynomial-time reduction from $3$-SAT to our decision problem. Together with Lemma~\ref{lem:np-membership}, it completes the proof of Theorem~\ref{thm:intro-cliques}.
\end{proof}

\section{Concluding Remarks}\label{sec:concluding}
We determine the NP-completeness of the weak saturation threshold problem for all cliques $K_r$ ($r\ge 3$) and balanced complete bipartite graphs $K_{r,r}$ ($r\ge 3$). The proof builds on the triangle case of Tancer and Tyomkyn~\cite{TT25}. For cliques, we apply simple lifting operations by adding a universal vertex at each step. For bipartite graphs, we use a construction that converts triangles into copies of $K_{3,3}$. This construction relies on the $3$-clean property, which we introduce, and on topological tools such as special subdivision.

The method in this paper does not extend to $C_4=K_{2,2}$ or to general $K_{s, t}$. In the case of $C_4$, since $\wsat(F,K_2)=0$, there is no meaningful analogue of a $2$-clean reduction. In the general case, a copy of $K_{s,t}$ in a bipartite host graph does not come with a prescribed assignment of its two parts to the two sides of the host graph: either side may realize the $s$-part or the $t$-part of the $K_{s,t}$. This causes no difficulty in the balanced case $K_{r,r}$, but when $s\ne t$ it leads to additional configurations that must be excluded.
For example, attempting to pass from $K_{3,3}$ to $K_{3,4}$ by padding only one side would require excluding structure such as $K_{4,2}$ in the base graph. The $3$-clean property rules out the copies $K_{a,a+1}$ needed for the balanced lifting, but it does not control such $K_{a,a+c}$ when $c\ge 2$. New hard families or a different forcing mechanism would therefore be needed to handle general $K_{s,t}$.

\section*{Acknowledgement}
Tianying Xie was supported by National Key R and D Program of China  2023YFA1010201 and National Natural Science Foundation of China grants 12501474 and 12471336.

\section*{Declaration on the use of AI}
The authors used generative AI tools to assist in discussing proof strategies, checking proofs, and improving the exposition. The authors take full responsibility for the mathematical arguments, results, and conclusions, all of which were carefully reviewed and verified by them.

\bibliographystyle{unsrt}

\end{document}